\newtheorem{theorem}{Theorem}[section]
\newtheorem{remark}{Remark}[section]
\newtheorem{proposition}[theorem]{Proposition}
\newtheorem{definition}{Definition}[section]
\newtheorem{lemma}[theorem]{Lemma}
\numberwithin{equation}{section}
\def\R{{\mathbb {R}}}
\DeclareMathOperator*{\supp}{supp}
\def\d{\displaystyle}
\def\e{{\varepsilon}}
\def\R{\mathbb{R}}
\title{Critical exponent for the one-dimensional wave equation with a space-dependent scale invariant damping and time derivative nonlinearity}
\author{
Ahmad Z. Fino\\
{\it \small 
College of Engineering and Technology, American University of the Middle East, Kuwait}\\
Mohamed  Ali  Hamza\\
{\it \small 
Imam Abdulrahman Bin Faisal University,
 Dammam, 34212, Saudi Arabia}\\
}
\date{}
\begin{document}
	\maketitle

\begin{abstract}
We investigate in this paper the Cauchy problem of the  one-dimensional wave equation with space-dependent damping of the form 
$\mu_0(1+x^2)^{-1/2}$, where $\mu_0>0$, and time derivative nonlinearity. 
 We establish global existence of mild solutions for small data compactly supported by employing energy estimates within suitable Sobolev spaces of the associated homogeneous problem.
  Furthermore, we derive a blow-up result under some positive initial data  by employing the test function method.  
This shows that the critical exponent  is  given by $p_G(1+\mu_0)=1+2/\mu_0$, when $\mu_0\in (0,1]$, where $p_G$ is the Glassey exponent.
To the best of our knowledge, this  constitutes  the first identification of the critical exponent range for this class of equations. As by product, we extend the global existence result to a more general class of space/time nonlinearities of the form $f(\partial_tu,\partial_x u)=|\partial_x u|^{q}$ or $f(\partial_tu,\partial_x u)=|\partial_tu|^{p}|\partial_x u|^{q}$,  with $p,q>1$.
\end{abstract}

\medskip

\noindent {\bf MSC 2020 Classification}:  35A01, 35B33, 35L15, 35D35, 35B44

\noindent {\bf Keywords:}    Nonlinear wave equations, global existence, blow-up, lifespan, critical exponent, scale-invariant damping, time-derivative nonlinearity.

%%%%%%%%%%%%%%%%%%%%%%%%%%%%%%%%%%%%%%%%%%%%%%%%%%
%%%%%%%%%%%%%%%%%%%%% SECTION1 %%%%%%%%%%%%%%%%%%%%%%%
%%%%%%%%%%%%%%%%%%%%%%%%%%%%%%%%%%%%%%%%%%%%%%%%%%

\section{Introduction}

In this work, we consider the Cauchy problem for  the wave equations with 
critical space-dependent damping
and  power-nonlinearity of derivative type
\begin{equation}
\label{NLW}
\left\{
\begin{array}{ll}
 \displaystyle\partial_{t}^2u - \partial_x^2 u +V(x)\partial_tu= f(\partial_tu), &\quad  x\in \mathbb{R},\, t>0, \\
u(x,0)= u_0(x), \quad \partial_tu(x,0)= u_1(x), &\quad x\in\mathbb{R},
\end{array}
\right.
\end{equation}
where $p>1$, $V(x)=\mu_0(1+x^2)^{-1/2}$, $\mu_0>0$, $f(\partial_tu)=|\partial_tu|^p $ or $|\partial_tu|^{p-1}\partial_tu$, and 
$(u_0,u_1)$ are compactly supported   initial data  in the following Sobolev space 
\begin{equation}\label{initialdata}
 (u_0,u_1)\in H^{2}(\mathbb{R})\times H^{1}(\mathbb{R}).
  \end{equation}
Such problems appear in models for wave propagation in a nonhomogeneous gas with damping, where the space-dependent coefficients represent either friction coefficients or potential (see \cite{Ikawa}).\\

In this paper, we investigate both the global existence of small data solutions and the finite-time blow-up of solutions to the Cauchy problem for the nonlinear wave equation \eqref{NLW}.  These results allow us to determine the critical exponent for $\mu_0\in (0,1]$.
\subsection{Prior results}
Before going to the main result, we shall review prior studies on the global existence/nonexistence of solutions to some nonlinear damped wave equations with various type of potentials and nonlinearities.

For the Cauchy problem of the semilinear damped wave equation with power nonlinearity and when the variable coefficients are missing, 
\begin{equation}\label{classicalcase}
\partial_{tt}u-\Delta u + \partial_t u=|u|^p,\qquad t>0,\,\,x\in\mathbb{R}^n,
\end{equation}
it has been conjectured that \eqref{classicalcase} has the diffuse structure as $t\rightarrow\infty$ (see e.g. \cite{Belloutfriedman}), i.e. the solutions of the damped equation seem to behave more like solutions of the corresponding diffusion/heat equation $\partial_t u-\Delta u =|u|^p$ at large times. This suggests that problem \eqref{classicalcase} should have $p_F(n):=1+{2}/{n}$ as critical exponent which is called the Fujita exponent named after the pioneering work by Fujita \cite{Fujita}. Indeed, Li and Zhou \cite{LiZhou} proved that if $n \leq 2$, $1 < p \leq 1 + 2/n$ and the data are positive on average, then the local
solution of \eqref{classicalcase} must blows up in a finite time. Moreover,
Todorova and Yordanov \cite{Todorovayordanov} and Zhang \cite{Zhang2001} showed that the critical exponent of \eqref{classicalcase} is the predicted one for any $n\geq1$. We mention that Todorova and Yordanov \cite{Todorovayordanov} proved the global existence by assuming that the data are compactly supported.

The study of 
 the  analogous   problem  of \eqref{classicalcase}
with space-dependent damping, namely
\begin{equation}\label{D1}
\partial_{t}^2u - \Delta u +\frac{\mu_0}
 {(1+|x|^2)^{\alpha/2}} \partial_tu= |u|^p \quad\text{in}\,\, \R^n\times(0, \infty),
\end{equation} 
 has been the subject of
several works in the literature.  First,
for $\alpha \in (0,1)$ the 
critical exponent is given by the Fujita-type exponent $p_F(n-\alpha)=1+2/(n-\alpha)$ (see Ikehata,    Todorova and Yordanov \cite{ITY}).
Later,    Nishihara,  Sobajima and  Wakasugi  proved in \cite{NSW} 
 that the critical exponent remains the Fujita-type exponent,   for $\alpha<  0$.
 The situation is completely different 
when $\alpha>1$, (scatering case). More precisely,
Lai and Tu  \cite{LT3}
showed that the critical exponent of \eqref{D1} is given by $p=p_S(n)$, where $p_S(n)$ stands for the 
so-called  Strauss exponent which is the critical exponent of the
 well-known equation  $\partial_{t}^2u -  \Delta u = |u|^p$. 
This explains the scattering phenomena.
Finally, when $\alpha=1$, the problem \eqref{D1} has a 
scale-invariance
and the critical exponent seems  $\max (p_F(n-1), p_S(n+\mu_0))$.
%, where $p_G(n)$ is the so-called Glassey exponent defined in \eqref{Glassey} below.
 In fact, Li \cite{Li} proved the  expected result in the  case $\mu_0\ge n$.
Also,  the blow-up part of this conjecture  is  known  (for small
values of $\mu_0$).
For the singular damping term $a(x)=a_0|x|^{-1}$,  global existence has been investigated in the case of an exterior domain in  a recent paper by Sobajima \cite{Sobajima5}.  In the case of whole space,  a blow-up result and lifespan estimate were also established by Ikeda and Sobajima \cite{Ikeda-Sobajima2021}.

In the context of  the general case where the damping coefficient  is  critical and is a function of both time and space
namely     $V(x,t)=\mu_0(1+|x|^2)^{-\alpha/2}(1+t)^{-\beta},$  the situation is much less understood. 
We  refer the reader to \cite{NSW,W0}.

Now, we turn back to the classical semilinear wave equation with time derivative nonlinearity without  damping term, namely    
\begin{equation}\label{B00}
 \partial^2_{t}u-\Delta u=|\partial_tu|^p,
\quad \mbox{in}\ \R^n\times(0,\infty),
\end{equation}
 for which the critical exponent is determined by the Glassey exponent 
\begin{equation}\label{Glassey}
p_G=p_G(n):=1+\frac{2}{n-1},
\end{equation}
see e.g. \cite{Agemi,Hidano1,Hidano2,John1,Rammaha,Sideris,Zhou1}.

For the  case with time dependent damping in the scale-invariant case, namely the following equation
\begin{equation}\label{B}
 \partial^2_{t}u-\Delta u+\frac{\mu_0}{1+t}\partial_tu=|\partial_tu|^p,
\quad \mbox{in}\ \R^n\times(0,\infty),
\end{equation}
the blow-up region for $p\in (1, p_G(n+\mu_0)]$ and lifespan estimates for solutions of \eqref{B} are established in \cite{Our2},  improving upon the earlier results in \cite{Palmieri, LT2}.  On the one hand, research on blow-up phenomena and lifespan estimates has centered on the scale-invariant model equation \eqref{B}, particularly the failure of small-data global solutions when \eqref{B} incorporates additional terms or when it is extended, via coupling, to wave-like systems. For example, \cite{BHHT,HH-apam, HHP1, HHP2} analyze Tricomi-type models to show blow-up, while \cite{HH-jaac,Our9} demonstrate that mass terms have no impact even with scale-invariant damping. Meanwhile, \cite{Our,Our2} reveal how mixed nonlinearities drive  singularity formation. 
On the other hand, our recent work in \cite{HFglobal} demonstrates global existence for the one-dimensional case, leading to the identification of   $p_G(1+\mu_0)$  as the critical exponent for  $\mu_0\in (0,2]$. 

\par

Replacing the damping term in equation  \eqref{B} with $1/(1+t)^{\alpha}\partial_tu$, for  $\alpha>1$, leads to dynamics characterized by the scattering regime.
Due to the weakened damping effect for large time, its influence on the long-term dynamics is expected to be negligible. Consequently, the critical exponent is expected to coincide with the Glassey exponent, defined in \eqref{Glassey}. While Lai and Takamura in \cite{LT2} have demonstrated the blow-up part, the question of global existence remains an open problem.

\par

In the same way, the analogous  nonlinear problem \eqref{NLW}  in higher dimension with
$V(x)$ being  changed by
$\mu_0(1+|x|^2)^{-\alpha/2},$  and $\alpha>1$,
namely
\begin{equation}\label{D}
\partial_{t}^2u - \Delta u +\frac{\mu_0}
 {(1+|x|^2)^{\alpha/2}}\partial_tu= |\partial_tu|^p  \quad\text{in}\,\, \R^n\times(0, \infty),
\end{equation}
 which corresponds to the scattering case,
 it is  reasonable to  expect that
the blow-up region of the solution of \eqref{D}  is similar to the  case of the  pure   wave equation and  the scattering damping term has no
influence in the dynamics. 
The predicted blow-up result was obtained in the case where $\alpha>2$,  by Lai and Tu  \cite{LT3}.
However, up to our knowledge, there is no result in the case where $\alpha \in (1,2].$
The condition $\alpha>2$ seems to be technical. Therefore, the result stated in \cite{LT3} should be extended to the case $\alpha>1$. It is important to note that the question of global existence remains an open problem.

Recently, in \cite{HF}, we proved that the blow-up region of the present problem \eqref{NLW}  is given by $p \in (1, p_G(n+\mu_0)]$  in higher dimensions $n\geq2$. However, up to our knowledge, there are no results on the blow-up in finite time in the one dimensional case as well as the small data global existence of solutions is still an open problem in any dimension.

This paper aims to address and resolve the critical exponent problem associated with equation \eqref{NLW}.  As previously explained for the problem  \eqref{B} in the one-dimensional case, where damping induces in the critical exponent a shift in the dimensional parameter of magnitude  $\mu_0$, for $\mu_0\in (0,2]$, we naturally expect 
that the same phenomenon holds for the problem \eqref{NLW} at least for small values of $\mu_0$.  In other words,  the predicted  critical exponent is  $p_c(1,\mu_0)=p_G(1+\mu_0)$, at least for small values of $\mu_0$.
 We will achieve this in two parts.
\begin{itemize}
\item Part I: Establish global existence results for the problem \eqref{NLW}, it is essential to analyze the associated linear problem and utilize decay rates in appropriate Sobolev spaces. That question 
was done by Ikehata-Todorova-Yordanov in \cite{Ikehata}. Indeed,  they showed that, in the case where the initial data  are compactly supported, the  solution satisfies the same kind of  energy estimates, namely
$\|(\nabla u(t), \partial_tu(t))\|_{L^2}\le C    (1+t)^{-\gamma},$ for some  $\gamma>0$,  and where the constant $C$ depends on the support of the initial data. 
We would like to draw the attention of the reader to the fact that a better understanding of the dependence of the constant related to the initial data is crucial for establishing a global existence result for the corresponding nonlinear problem. In this context,   in recent work in  \cite{Sobajima5}, the author successfully established a global existence result for the nonlinear problem with space-dependent damping in an exterior domain. This achievement was made  by employing weighted energy estimates for the associated linear problem.
In the present paper,   we  start by studying  the associated homogeneous problem,  by using  energy estimates
 for compactly supported initial data. Subsequently,  we address the nonlinearity using the fixed-point theorem.
Therefore, the following upper bound for the critical exponent of problem \eqref{NLW} is obtained:
\begin{equation}\label{critical1}
p_c(1,\mu_0)\le 1+\frac{2}{\alpha_0}, 
\end{equation}
where
\begin{equation}\label{alpha00}
\alpha_0:=\min (1,\mu_0).
\end{equation}

\item Part II: 
For initial data with compact support, we derive a blow-up result for the nonlinear wave equation \eqref{NLW},  by employing the test function method.  Specifically, we employ a test function constructed as the product of a  cut-off function and an explicit solution to the adjoint equation associated with the linear part of \eqref{NLW}. This approach establishes a lower bound for the blow-up region for the solution of equation \eqref{NLW}.
Hence,  the following lower bound for the critical exponent of problem \eqref{NLW} is established:
\begin{equation}\label{critical2}
p_c(1,\mu_0)\ge 1+\frac{2}{\mu_0}.
\end{equation} 
\end{itemize}

\par

By combining the results from Parts I and II, we easily deduce that     
\begin{equation}\label{critical11}
p_c(1,\mu_0)= 1+\frac{2}{\mu_0}, \quad\textrm{if}\,\, \mu_0 \in(0,1].
\end{equation}
\par

At the end of this subsection we prepare notation and several definitions used throughout this paper. 
We denote by $C$ a positive constant, which may change from line to line.
$L^p = L^p(\mathbb{R})$
stands for the usual Lebesgue space,
and
$H^{k} = H^{k}(\mathbb{R})$
for $k \in \mathbb{Z}_{\ge 0}$
is the Sobolev space defined by:
\begin{align*}
	H^{k}(\mathbb{R}) = \left\{ f \in L^2(\mathbb{R}) ;
			\| f \|_{H^{k}} = \sum_{\ell = 0}^k \|  f^{(\ell)}  \|_{L^2} < \infty \right\}.
\end{align*}

\subsection{Main Result}\label{subsec3}
The purpose of this subsection is to state our main results. Recall that $f(\partial_tu)=|\partial_tu|^p $ or $|\partial_tu|^{p-1}\partial_tu$. We start by giving the definition of mild solution of the system (\ref{NLW}). 
\begin{definition}(Mild solution)\\ 
Assume that $(u_0,u_1)\in H^{2}(\mathbb{R})\times H^{1}(\mathbb{R})$. We say that a function $u$ is a
mild solution of (\ref{NLW}) if $u\in \mathcal{C}^1([0,T],H^{1}(\mathbb{R}))$ is subject to the initial conditions $u(0)=u_0$, $\partial_t u(0)=u_1$, and satisfies
the integral equation
 \begin{equation}\label{mildsolution}
u(t,x)=R(t)(u_0,u_1)+\int_0^tS(t,s)f(\partial_tu)\,ds=:u^{lin}(t)+u^{nl}(t)
\end{equation} 
in the sense of $H^{1}(\mathbb{R})$, where the operators $R(t)$ and $S(t,s)$ are defined in Subsection \ref{subs2.3} below. If the time $T>0$ can be chosen arbitrary, then $u$ is called a global-in-time mild solution.
\end{definition}
For later use, we define the following functional space, which will be used in the main results:
$$E:=\mathcal{C}([0,\infty),H^2(\mathbb{R}))\cap    \mathcal{C}^1([0,\infty),H^1(\mathbb{R}))\cap \mathcal{C}^2([0,\infty),L^2(\mathbb{R})).$$
\begin{theorem}$(\hbox{Small data global existence})$\label{globalexistence1}\\ 
Assume that  $u_0 \in H^2(\R)$ and $u_1 \in H^1(\R)$ which are compactly supported on  $(-R_0,R_0)$, with $R_0>0$. 
If $p>1+{2}/{\alpha_0}$, where $\alpha_0:=\min (\mu_0,1)$, then there exists a positive constant $0<\varepsilon_0\ll1$, small enough, such that for any initial data satisfying 
$$\|u_0\|_{H^2}+\|u_1\|_{H^1}\leq\varepsilon_0,$$
 problem (\ref{NLW}) possesses a uniquely global mild solution  $u\in E$. Moreover, the solution satisfies the following estimate
$$(1+t)^{-1}\|u(t)\|_{L^2}+
\|\partial_x u(t)\|_{H^1}+\| \partial_t u(t)\|_{H^1}+\| \partial^2_t u(t)\|_{L^2}\leq C\,(1+t)^{-\frac{\alpha}{2}},$$
where $\alpha$ is defined by \eqref{alpha}.
\end{theorem}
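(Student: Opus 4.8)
The plan is to construct the solution by a fixed-point argument for the integral equation \eqref{mildsolution} in a complete metric space adapted to the decay rate $\alpha$ of the statement. First I would introduce the norm
$$\|u\|_X:=\sup_{t\ge 0}(1+t)^{\alpha/2}\Big[(1+t)^{-1}\|u(t)\|_{L^2}+\|\partial_x u(t)\|_{H^1}+\|\partial_t u(t)\|_{H^1}+\|\partial_t^2 u(t)\|_{L^2}\Big],$$
set $X:=\{u\in E:\|u\|_X<\infty\}$, and work on the closed ball $X_M:=\{u\in X:\|u\|_X\le M\}$ for a small radius $M$ to be fixed. On $X_M$ I would define $\Phi(u):=u^{lin}+u^{nl}(u)$ with $u^{lin}(t)=R(t)(u_0,u_1)$ and $u^{nl}(t)=\int_0^t S(t,s)f(\partial_t u(s))\,ds$, and prove that $\Phi$ maps $X_M$ into itself and is a contraction there. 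I note that the weight $(1+t)^{-1}$ on $\|u\|_{L^2}$ is forced by the derivative nonlinearity: $u$ itself need not decay, and its $L^2$-norm is recovered by integrating $\|\partial_t u(s)\|_{L^2}\le M(1+s)^{-\alpha/2}$ in time, which yields $\|u(t)\|_{L^2}\lesssim (1+t)^{1-\alpha/2}$ since $\alpha=\alpha_0\le 1$.

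For the linear term I would invoke the energy estimates for the homogeneous problem established earlier: because $(u_0,u_1)$ are supported in $(-R_0,R_0)$, they give $\|u^{lin}\|_X\le C_0(R_0)\,(\|u_0\|_{H^2}+\|u_1\|_{H^1})\le C_0(R_0)\,\varepsilon_0$. The essential feature is the explicit dependence of the constant on the support radius, which is harmless here but governs the nonlinear step.

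The nonlinear estimate is the crux. By finite speed of propagation, $u(s)$ and hence $f(\partial_t u(s))$ are supported in $|x|\le R_0+s$. The one-dimensional embedding $H^1(\R)\hookrightarrow L^\infty(\R)$ gives, for both $f=|\partial_t u|^p$ and $f=|\partial_t u|^{p-1}\partial_t u$,
$$\|f(\partial_t u(s))\|_{L^2}+\|\partial_x f(\partial_t u(s))\|_{L^2}\le C\,\|\partial_t u(s)\|_{L^\infty}^{\,p-1}\|\partial_t u(s)\|_{H^1}\le C\,\|\partial_t u(s)\|_{H^1}^{\,p}\le C\,M^p(1+s)^{-p\alpha/2}.$$
Inserting this into the Duhamel formula and applying the linear decay of $S(t,s)$---now with the constant reflecting a source of support $R_0+s$, of the form $C(R_0+s)^{\theta}(1+t-s)^{-\alpha/2}$ for the appropriate exponent $\theta$---reduces the bound $\|u^{nl}(u)\|_X\le CM^p$ to the uniform-in-$t$ estimate
$$\int_0^t C\,(R_0+s)^{\theta}\,(1+t-s)^{-\alpha/2}\,(1+s)^{-p\alpha/2}\,ds\le C\,(1+t)^{-\alpha/2}.$$
Splitting the integral at $s=t/2$ and balancing the powers---where the growth $(R_0+s)^{\theta}$ from the expanding support is the decisive contribution against the nonlinear decay gain $(1+s)^{-p\alpha/2}$---one checks this holds exactly in the supercritical range $p>1+2/\alpha_0$. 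Choosing $M\sim C_0(R_0)\varepsilon_0$ small then gives $CM^p\le M/2$, so $\Phi(X_M)\subset X_M$.

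Finally, the contraction property follows by applying the same chain of estimates to $f(\partial_t u)-f(\partial_t v)$, using $\big||a|^p-|b|^p\big|\le C(|a|^{p-1}+|b|^{p-1})|a-b|$ together with the $L^\infty$-control of $\partial_t u$ and $\partial_t v$, which yields $\|\Phi(u)-\Phi(v)\|_X\le CM^{p-1}\|u-v\|_X\le\tfrac12\|u-v\|_X$ on $X_M$. Banach's fixed-point theorem then produces a unique mild solution $u\in X_M\subset E$, and the second time-derivative is controlled in $L^2$ by reading off the equation $\partial_t^2 u=\partial_x^2 u-V(x)\partial_t u+f(\partial_t u)$. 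I expect the principal obstacle to be precisely the nonlinear Duhamel step: one must quantify how the linear decay constant degrades under the support growth $R_0+s$ forced by finite propagation speed, and verify that this degradation is overcome by the nonlinear gain exactly on the threshold $p>1+2/\alpha_0$---which is why the sharp support-dependence of the homogeneous energy estimate is indispensable.
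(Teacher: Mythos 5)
Your proposal is correct in outline and follows essentially the same route as the paper: a Banach fixed point in the same weighted space with the same norm, the linear part controlled by the homogeneous energy estimates, the nonlinearity handled through $H^1(\R)\hookrightarrow L^\infty(\R)$, and the contraction obtained from difference estimates. The one substantive divergence is in the Duhamel step, precisely the point you flag as the principal obstacle. You posit an inhomogeneous decay of the form $C(R_0+s)^\theta(1+t-s)^{-\alpha/2}$, i.e.\ a constant that degrades with the support growth, and you then split the integral at $s=t/2$ and balance powers with $\theta$ left undetermined. The paper's Proposition \ref{prop2.1} shows that nothing degrades: its hypothesis is formulated for data given at time $s_0$ and supported in $(-R_0-s_0,R_0+s_0)$ --- exactly the support class that finite propagation speed forces on the source $f(\partial_t u(s))$ --- and the constant depends only on $R_0$, while the loss appears instead as the \emph{relative} decay factor $\left(\frac{1+s}{1+t}\right)^{\alpha/2}$, reflecting the scale-invariant strength $V\sim\mu_0/(1+t)$ of the damping on the light cone. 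With this form no splitting is needed: the factor $(1+t)^{-\alpha/2}$ pulls out of the Duhamel integral, leaving $\int_0^t(1+s)^{-(p-1)\alpha/2}\,ds$, which is bounded exactly when $(p-1)\alpha/2>1$ (condition \eqref{alpha-alpha0}), attainable for $p>1+2/\alpha_0$ by choosing $\mu\in(2/(p-1),\mu_0)$. Your posited estimate is in fact a consequence of the paper's (with $\theta=\alpha/2$, since $\frac{1+s}{1+t}\le\frac{1+s}{1+t-s}$), and with that value your $t/2$-splitting closes in exactly the stated range, so your plan completes once Proposition \ref{prop2.1} is invoked in its precise form; but note the form you guessed, with an unspecified $\theta$, is not itself an estimate you could prove independently of that proposition. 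Two smaller points: the paper bounds $\|\partial_x^2\Phi(u)\|_{L^2}$, $\|\partial^2_{xt}\Phi(u)\|_{L^2}$ and $\|\partial_t^2\Phi(u)\|_{L^2}$ via the second estimate \eqref{5mai1} of Proposition \ref{prop2.1}, which requires controlling $\|f(\partial_t u)\|_{H^1}$ and $\|\partial_t f(\partial_t u)\|_{L^2}$ (your alternative of reading $\partial_t^2 u$ off the equation still needs $\|\partial_x^2\Phi(u)\|_{L^2}$, which comes from the same estimate); and in the contraction step the derivative differences of the nonlinearity require the chain-rule estimates of Lemma \ref{handle} (available since $p>1+2/\alpha_0\ge 3$), not only the elementary inequality $\big||a|^p-|b|^p\big|\le C(|a|^{p-1}+|b|^{p-1})|a-b|$ that you cite.
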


To state the blow-up result, we first examine the relationship between the size of the initial data and the lifespan of solutions. For this reason, it is appropriate to scale the initial conditions by a small positive parameter $ \varepsilon $. Specifically, we consider the following problem
\begin{equation}
\label{NLWb}
\left\{
\begin{array}{ll}
 \displaystyle\partial_{t}^2u - \partial_x^2 u +V(x)\partial_tu= f(\partial_tu), &\quad  x\in \mathbb{R},\, t>0, \\
u(x,0)= \varepsilon u_0(x), \quad \partial_tu(x,0)= \varepsilon u_1(x), &\quad x\in\mathbb{R},
\end{array}
\right.
\end{equation}
To formulate our blow-up results, we now present the weak formulation of equation \eqref{NLWb} in the appropriate energy space.
\begin{definition}\label{def1}
Let  $u_0\in H^1(\mathbb{R})$, $u_1\in L^2(\mathbb{R})$  and $T>0$.  Let $u$ be such that
$u\in \mathcal{C}([0,T),H^1(\R))\cap \mathcal{C}^1([0,T),L^2(\R)) \
\text{and} \ \partial_tu \in L^p_{loc}((0,T)\times \R),$
verifies,  for any $\varphi \in \mathcal{C}^1_0\left([0, T)\times \R\right) \cap \mathcal{C}^{\infty}\left((0, T)\times \R\right)$, the following identity:
\begin{eqnarray}\label{weaksol}
&{}& \e \int_{\R}u_1(x)\varphi( x,0)dx+\int_0^T\int_{\R}f(\partial_tu)\varphi(x, t) \, dxdt \\
&{}&=- \int_0^T\int_{\R} \partial_tu(t, x)\partial_t\varphi( x,t) \, dxdt
+ \int_0^T\int_{\R}
\ \partial_x u(t, x)\partial_x\varphi(x, t) \, dxdt + \int_0^T\int_{\R} V(x) \partial_tu(t, x)\varphi( x,t) \, dxdt,\nonumber
\end{eqnarray}
and the condition $u(x,0)=\varepsilon u_0(x)$,  is satisfied. Then, $u$ is called an {\bf energy solution} of (\ref{NLWb}) on $[0,T)$. We denote the lifespan for the energy solution by
$$T_{\e}(u_0,u_1):=\sup\{T\in(0,\infty];\,\,\hbox{there exists a unique energy solution $u$ of \eqref{NLWb}}\}.$$
Moreover, if $T>0$ can be arbitrary chosen, i.e. $T_{\e}(u_0,u_1)=\infty$, then $u$ is called a global energy solution of \eqref{NLWb}.
\end{definition}
\begin{theorem}$(\hbox{Blow-up})$\label{th2}\\
Assume that  $u_0 \in H^2(\R)$ and $u_1 \in H^1(\R)$ which are compactly supported functions on  $(-R_0,R_0)$, with $R_0>0$, and satisfy
  \begin{equation}\label{C0}
   \int_{\R} \big( u_0''(x)+ u_1(x)\big)\phi(x) dx >0,
  \end{equation}
  where $\phi(x)$  is  a solution of the elliptic problem \eqref{phi}. If $1<p\le 1+2/{\mu_0},$ then the energy solution $u$ of \eqref{NLWb} with compact support
\begin{equation}\label{suppcond}
\supp u \in	\left\{(x,t) \in \R  \times   [0,T) \colon |x| \le R_0+ t\right\}
\end{equation}
blows-up in finite time. More precisely, there exists a constant $\e_0=\e_0(u_0, u_1, \mu_0, p,R_0)>0$
such that the lifespan $T_\e$ verifies
\begin{equation}\label{T-epss}
T_\e \leq
\d \left\{
\begin{array}{ll}
 C \, \e^{-\frac{2(p-1)}{2-\mu_0 (p-1)}}
 &
 \ \text{if}\, \
 1<p<1+\frac2{\mu_0}, \vspace{.1cm}
 \\
 \exp\left(C\e^{-(p-1)}\right)
&
 \ \text{if}\, \ p=1+\frac2{\mu_0},
\end{array}
\right.
\end{equation}
for $0<\e\le\e_0$ and some constant $C$ independent of $\e$.
\end{theorem}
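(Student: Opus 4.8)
The plan is to prove Theorem~\ref{th2} by the test function method, using as test function the product of a time cutoff and the explicit solution $\psi(x,t)=e^{-t}\phi(x)$ of the adjoint linear equation, where $\phi>0$ solves the elliptic problem \eqref{phi}. Since $\partial_t\psi=-\psi$ and $\partial_x^2\psi=e^{-t}\phi''$, the function $\psi$ satisfies the adjoint equation $\partial_t^2\psi-\partial_x^2\psi-V\partial_t\psi=0$ precisely when $\phi''=(1+V)\phi$, which is exactly the relation I expect \eqref{phi} to encode. The first task is therefore to construct the even positive solution $\phi$ of $\phi''=(1+V)\phi$ and to determine its asymptotics. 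Writing $\phi=e^{w}$ and balancing $w''+(w')^2=1+V$ with $V=\mu_0(1+x^2)^{-1/2}\sim\mu_0/|x|$, I expect $w(x)=|x|+\tfrac{\mu_0}{2}\ln|x|+O(1)$, that is
$$\phi(x)\sim c\,|x|^{\mu_0/2}e^{|x|},\qquad |x|\to\infty.$$
The decisive consequence, combined with the finite-propagation support \eqref{suppcond}, is that the exponentials cancel on the light cone and
$$\int_{\R}\psi(x,t)\,dx=e^{-t}\int_{|x|\le R_0+t}\phi(x)\,dx\sim C\,(1+t)^{\mu_0/2}.$$
This polynomial rate is where $\mu_0$ enters, producing the effective dimensional shift responsible for the threshold $p=1+2/\mu_0$.

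Next I would insert $\varphi(x,t)=\psi(x,t)\,\eta(t/T)^{k}$, with $\eta$ a standard cutoff equal to $1$ on $[0,1/2]$ and vanishing near $1$, and $k$ large, into the weak formulation \eqref{weaksol}. Because $\psi$ solves the adjoint equation, all bulk contributions cancel except those in which $\eta$ is differentiated, while the $t=0$ boundary density $\partial_t u\,\psi-u\partial_t\psi+Vu\psi$ evaluates, via $\phi''=(1+V)\phi$ and one integration by parts, to $\e\int_{\R}(u_0''+u_1)\phi\,dx$, which is strictly positive by the sign condition \eqref{C0}. This yields the core inequality
$$\e\int_{\R}(u_0''+u_1)\phi\,dx+\int_0^T\!\!\int_{\R}|\partial_t u|^p\,\psi\,\eta^{k}\,dx\,dt\le \frac{C}{T}\int_0^T\!\!\int_{\R}|\partial_t u|\,\psi\,|\eta'(t/T)|\,\eta^{k-1}\,dx\,dt.$$

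I would then bound the right-hand side by Hölder's inequality with exponents $p$ and $p'=p/(p-1)$, absorbing the factor $\big(\int\!\int|\partial_t u|^p\psi\eta^{k}\big)^{1/p}$ into the left-hand side. The surviving weight integral is $T^{-p'}\int\!\int_{\supp\eta'}\psi\,dx\,dt$, which by the rate above behaves like $T^{-p'}\cdot T\cdot(1+T)^{\mu_0/2}=T^{\,1-p'+\mu_0/2}$. Raising to the power $1/p'=(p-1)/p$ shows that the data term obeys $\e\,C_0\le C\,T^{\beta}$ with $\beta$ a positive multiple of $\mu_0(p-1)-2$; in particular $\beta\le 0$ exactly when $p\le 1+2/\mu_0$, with $\beta=0$ at criticality. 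For $1<p<1+2/\mu_0$ the right-hand side tends to $0$ as $T\to\infty$, contradicting $C_0>0$ for a global solution and, upon solving $\e C_0\le CT^{\beta}$ for the largest admissible $T$, giving the polynomial lifespan bound in \eqref{T-epss}. At the critical exponent $p=1+2/\mu_0$ the borderline weight integral produces a logarithmic factor $\log T$, and balancing $\e^{p-1}\log T\lesssim 1$ yields the exponential bound $T_\e\le\exp(C\e^{-(p-1)})$.

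I expect the main obstacle to be the first step: establishing the asymptotics $\phi(x)\sim c\,|x|^{\mu_0/2}e^{|x|}$ rigorously rather than heuristically, and converting it into the sharp two-sided estimate $\int_{\R}\psi(\cdot,t)\,dx\sim(1+t)^{\mu_0/2}$, since the exact power of $(1+t)$ is what fixes the exponents in \eqref{T-epss}. Secondary difficulties are justifying the test-function computation for merely energy solutions in the sense of Definition~\ref{def1} by a density/regularization argument, treating the genuinely signed nonlinearity $f=|\partial_t u|^{p-1}\partial_t u$ on equal footing with $|\partial_t u|^p$, and carrying out the critical-case logarithmic refinement, which typically requires slicing the time interval into dyadic pieces rather than using a single cutoff.
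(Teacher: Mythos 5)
Your setup matches the paper's: the same adjoint test function $\psi(x,t)=e^{-t}\phi(x)$ with $\phi$ solving \eqref{phi}, essentially the same Riccati-type analysis behind Lemma \ref{lem0} (note only the \emph{upper} bound $\phi(x)\le C(1+|x|)^{\mu_0/2}e^{|x|}$ is needed, not two-sided asymptotics), the same key weight estimate \eqref{psi} of Lemma \ref{lem1}, the correct data term $\e\int(u_0''+u_1)\phi\,dx$, and the correct exponent arithmetic leading to \eqref{T-epss} in the subcritical case. The proposal fails, however, at the step you treat as routine: the cutoff/H\"older absorption.

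The gap is in your ``core inequality.'' Plugging $\varphi=\psi\,\eta^k$ into \eqref{weaksol} and integrating by parts so that the adjoint equation produces the bulk cancellation, the terms in which $\eta$ is differentiated are
\begin{equation*}
-\,\frac{k}{T}\int_0^T\!\!\int_{\R}\bigl[\partial_t u+(1+V(x))\,u\bigr]\,\psi\,\eta^{k-1}\,\eta'(t/T)\,dx\,dt,
\end{equation*}
because both $-\int\!\!\int\partial_t u\,\partial_t\varphi$ and $\int\!\!\int V\partial_t u\,\varphi$ must be integrated by parts in time to create $\partial_t^2\psi-\partial_x^2\psi-V\partial_t\psi=0$, and each such integration by parts leaves a cutoff remainder proportional to $u$ itself. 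Your right-hand side keeps only the $|\partial_t u|$ part. The omission is not cosmetic: the nonlinearity controls only $\partial_t u$, and if you try to recover $u$ through $u(t)=\e u_0+\int_0^t\partial_s u\,ds$, the portion of that integral with $s\in[T/2,t]$, i.e.\ inside $\supp\eta'$, carries no factor $\eta^k$; after H\"older it is bounded by $C\bigl(\int_{T/2}^T\!\int_{\R}|\partial_t u|^p\psi\,dx\,ds\bigr)^{1/p}\,T^{(1+\mu_0/2)/p'}$ \emph{without} the prefactor $1/T$ (the factor $\frac{1}{T}\int_{T/2}^T|\eta'(t/T)|\,dt$ is of order one). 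This term can neither be absorbed into $\int\!\!\int|\partial_t u|^p\psi\,\eta^k$ nor beaten by a power of $T$, so no contradiction follows. This is exactly the known obstruction for Glassey-type (derivative) nonlinearities, and it is why the paper's proof does not use a one-shot Mitidieri--Pohozaev argument but instead invokes the machinery of \cite{HF} (in the spirit of \cite{LT2}): one works with the functional $Y(t)=\int_{\R}[\partial_t u+(1+V)u]\psi\,dx$, which satisfies $Y'(t)=\int_{\R} f\psi\,dx$ and $Y(0)=\e\int_{\R}(u_0''+u_1)\phi\,dx>0$ by \eqref{C0}, then extracts a lower bound for $\int_{\R}\partial_t u\,\psi\,dx$, applies H\"older against \eqref{psi}, and integrates the resulting Riccati inequality $G'(t)\ge c\,(\e+G(t))^p(1+t)^{-\mu_0(p-1)/2}$ for $G(t)=\int_0^t\!\int_{\R} f\psi$.

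A second, related defect: even granting your core inequality, a single cutoff cannot yield the critical case of \eqref{T-epss}. At $p=1+2/\mu_0$ your exponent $1-p'+\mu_0/2$ equals $0$ and the weight integral is $\sim T^{1+\mu_0/2}$ with \emph{no} logarithm, so the method gives only $\e C_0\le C$ --- neither a contradiction nor a lifespan bound. The $\log T$ you invoke does not arise in this computation; it appears automatically in the ODE route above, where $\int_0^T(1+t)^{-1}dt=\log(1+T)$ forces $T_\e\le\exp(C\e^{-(p-1)})$, or else must be produced by a genuine slicing/iteration argument. You correctly flag dyadic slicing as needed, but that is precisely the nontrivial content of the critical case, which the proposal defers rather than supplies.
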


\begin{remark}
We emphasize, by Theorems \ref{globalexistence1} and \ref{th2}, that when $\mu_0 \in (0,1]$
the  critical exponent in the one-dimensional case is  given by 
$$p_c(1,\mu_0)= 1+\frac{2}{\mu_0}.$$
Let us mention that, When  $\mu_0 >1$, the critical exponent $p_c(1,\mu_0)$ is known to lie within the interval $ [1+2/\mu_0,3]$.  However, the precise value of $p_c(1,\mu_0)$ in this case remains an open question.
\end{remark}

\par

We conclude by considering the generalized Cauchy problem given by
\begin{equation}
\label{General}
\left\{
\begin{array}{ll}
 \displaystyle\partial_{t}^2u - \partial_x^2 u +V(x)\partial_tu= f(\partial_tu,\partial_x u), &\quad  x\in \mathbb{R},\, t>0, \\
u(x,0)= u_0(x), \quad \partial_tu(x,0)= u_1(x), &\quad x\in\mathbb{R},
\end{array}
\right.
\end{equation}
where $V(x)=\mu_0(1+x^2)^{-1/2}$, $\mu_0>0$, and the nonlinearity is of the form
$$f(\partial_tu,\partial_x u)=|\partial_tu|^{\beta}|\partial_x u|^{\gamma},$$
with either $(\beta,\gamma)=(0,q)$ or $(\beta,\gamma)=(p,q)$, $p,q>1$.
\begin{theorem}$(\hbox{Small data global existence: General Case})$\label{globalexistence2}\\ 
Assume that  $u_0 \in H^2(\R)$ and $u_1 \in H^1(\R)$ which are compactly supported on  $(-R_0,R_0)$, with $R_0>0$. Assume that the nonlinear exponents \( \beta, \gamma > 0 \) satisfy one of the following conditions:
\begin{itemize}
    \item[$(\mathrm{i})$] \( (\beta, \gamma) = (0, q) \) with \( q > 1 + 2/\alpha_0 \),
    \item[$(\mathrm{ii})$] \( (\beta, \gamma) = (p, q) \) with \( p , q > 1 \), and \( p + q > 1 + 2/\alpha_0 \),
\end{itemize}
where $\alpha_0:=\min (\mu_0,1)$. Then there exists a positive constant $0<\varepsilon_0\ll1$, small enough, such that for any initial data satisfying 
$$\|u_0\|_{H^2}+\|u_1\|_{H^1}\leq\varepsilon_0,$$
problem \eqref{General} possesses a uniquely global mild solution $u\in E$. Moreover, the solution satisfies the decay estimate
$$(1+t)^{-1}\|u(t)\|_{L^2}+
\|\partial_x u(t)\|_{H^1}+\| \partial_t u(t)\|_{H^1}+\| \partial^2_t u(t)\|_{L^2}\leq C\,(1+t)^{-\frac{\alpha}{2}},$$
where $\alpha$ is defined by \eqref{alpha}.
\end{theorem}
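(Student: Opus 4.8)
The plan is to run the same Banach fixed-point scheme used in the proof of Theorem \ref{globalexistence1}, since the linear part is untouched and only the source $f$ changes. I would work in the complete metric space
\[
X=\Big\{u\in E:\ \|u\|_X:=\sup_{t\ge0}(1+t)^{\frac{\alpha}{2}}N(u;t)\le M\varepsilon_0\Big\},\qquad
N(u;t):=\tfrac{\|u(t)\|_{L^2}}{1+t}+\|\partial_xu(t)\|_{H^1}+\|\partial_tu(t)\|_{H^1}+\|\partial_t^2u(t)\|_{L^2},
\]
with $\alpha$ the exponent appearing in Theorem \ref{globalexistence1}, and define $\Phi(u):=u^{lin}+u^{nl}$ through \eqref{mildsolution} with $f=f(\partial_tu,\partial_xu)$. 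The linear bound $\|u^{lin}\|_X\le C(\|u_0\|_{H^2}+\|u_1\|_{H^1})$ and the mapping properties of $S(t,s)$ on this weighted space are exactly those already established for Theorem \ref{globalexistence1} and may be quoted verbatim. Thus the entire argument reduces to estimating the nonlinear contribution $u^{nl}$ and its Lipschitz dependence on $u$.

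The decisive new observation is that in one space dimension $H^1(\R)\hookrightarrow L^\infty(\R)$, so $\partial_xu$ inherits exactly the same $L^\infty$ control as $\partial_tu$:
\[
\|\partial_xu(t)\|_{L^\infty}\le C\|\partial_xu(t)\|_{H^1}\le C(1+t)^{-\frac{\alpha}{2}}\|u\|_X,\qquad
\|\partial_tu(t)\|_{L^\infty}\le C(1+t)^{-\frac{\alpha}{2}}\|u\|_X.
\]
I would then estimate the source in the norms demanded by $S(t,s)$ (the $L^2$- and first-order norms of the homogeneous energy estimate). For case $(\mathrm{i})$ the term $f=|\partial_xu|^q$ is structurally identical to $|\partial_tu|^p$ after exchanging $\partial_tu$ and $\partial_xu$; using $H^1\hookrightarrow L^{2q}$ gives $\||\partial_xu|^q\|_{L^2}\le C(1+s)^{-\frac{q\alpha}{2}}\|u\|_X^q$, and the hypothesis $q>1+2/\alpha_0$ from \eqref{alpha00} is precisely what lets the Duhamel integral close with rate $(1+t)^{-\alpha/2}$. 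For case $(\mathrm{ii})$ I would split by Hölder, placing one factor in $L^\infty$,
\[
\big\||\partial_tu|^p|\partial_xu|^q\big\|_{L^2}\le\|\partial_xu\|_{L^\infty}^{q}\,\|\partial_tu\|_{L^{2p}}^{p}\le C(1+s)^{-\frac{(p+q)\alpha}{2}}\|u\|_X^{p+q},
\]
so the governing homogeneity is $p+q$ and the assumption $p+q>1+2/\alpha_0$ again produces the correct decay after integrating against $S(t,s)$.

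The main obstacle I anticipate is not this $L^2$-level bound but closing the higher-order part of $E$, i.e. the memberships $u(t)\in H^2$ and $\partial_tu(t)\in H^1$, which force control of a spatial derivative of the source. Differentiating the product in case $(\mathrm{ii})$ yields terms such as $|\partial_tu|^{p-1}\partial_t\partial_xu\,|\partial_xu|^q$ and $|\partial_tu|^p|\partial_xu|^{q-1}\partial_x^2u$, in which the top-order factor ($\partial_t\partial_xu$ or $\partial_x^2u$) is only available in $L^2$. One must then place every remaining factor in $L^\infty$ via $H^1\hookrightarrow L^\infty$ and check that the surviving powers $p-1>0$ and $q-1>0$ still deliver the net decay $(1+s)^{-\frac{(p+q)\alpha}{2}}$ — this is exactly why $p,q>1$ is imposed. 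Crucially, since $p$ or $q$ individually may lie below $1+2/\alpha_0$, neither factor can be treated as a bounded coefficient, so the full product homogeneity must be retained throughout. Once these source estimates are secured, the self-mapping property follows in the weighted norm of $X$, and the contraction follows in a suitable lower-order weighted norm using the elementary Lipschitz bounds for $|s|^p,|s|^q$ (valid for $p,q>1$), exactly as in Theorem \ref{globalexistence1}; the Banach fixed-point theorem then yields the unique global solution $u\in E$ together with the stated decay estimate.
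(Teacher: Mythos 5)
Your proposal is correct and follows essentially the same route as the paper's proof: the identical fixed-point scheme in the weighted space of Theorem \ref{globalexistence1}, the embedding $H^1(\R)\hookrightarrow L^\infty(\R)$ applied now to $\partial_x u$ as well, product-rule estimates for the derivatives of the source with the top-order factor in $L^2$ and the rest in $L^\infty$ (which is exactly where $p,q>1$ enters), and a H\"older split of $|\partial_t u|^p|\partial_x u|^q$ giving total homogeneity $p+q$, so that $p+q>1+2/\alpha_0$ closes the Duhamel integral. The only cosmetic difference is the direction of the split in case $(\mathrm{ii})$ --- the paper puts $|\partial_t u|^p$ in $L^\infty$ and $|\partial_x u|^q$ in $L^2$, you do the symmetric choice --- which changes nothing in the resulting decay rate.
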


\par

This paper is organized as follows:
 Section 2 is devoted to the energy estimates of the solution of the corresponding linear equation of \eqref{NLW}.  Section \ref{sec3} is dedicated to the proof of the small data global existence result (Theorem \ref{globalexistence1}) while the proof of the blow-up result (Theorem \ref{th2})  is given in Section \ref{sec5}. 
 Finally, Section \ref{sec4} generalizes the global existence results to a  class of nonlinearities (Theorem \ref{globalexistence2})
 
%%%%%%%%%%%%%%%%%%%%%%%%%%%%%. Section 2  %%%%%%%%%%%%%%%%%%%%%%%%%%%%%%%

\section{Homogeneous equation}\label{sec2}
Having in mind that the asymptotic profile of solutions to the linear part of the equation influences the
critical exponent for the problem, we consider the following homogeneous problem
\begin{equation}\label{1}
\begin{cases}
\partial_{t}^2v-\partial_x^2 v +V(x) \partial_{t}v=0, &x\in \mathbb{R},\,t>s_0,\\
v(x,s_0)=  g(x,s_0),\, \partial_{t}v(x,s_0)=  h(x,s_0),&x\in \mathbb{R},\\
\end{cases}
\end{equation}
where $s_0\geq0$ is a fixed time, and the potential is given by $V(x)=\mu_0(1+x^2)^{-1/2}$ with $\mu_0>0$. In addition, we assume that the initial data $(g(s_0),h(s_0))$ are compactly supported in the interval $(-R_0-s_0,R_0+s_0)$, for some $R_0>0$, and that $(g(s_0),h(s_0))\in H^2(\mathbb{R})\times H^1(\mathbb{R})$.\\
We begin by providing the definition of a strong solution to \eqref{1}. 
\begin{definition}[Strong solution]${}$\\
Let $(g(s_0),h(s_0))\in H^2(\mathbb{R})\times H^1(\mathbb{R})$. A function $v$ is said to be a strong solution to \eqref{1} if
	$$	v\in  \mathcal{C}\left([s_0,\infty),H^2(\mathbb{R})\right)\cap \mathcal{C}^{1}\left([s_0,\infty),H^1(\mathbb{R})\right)\cap \mathcal{C}^{2}\left([s_0,\infty),L^2(\mathbb{R})\right)
,$$
and $u$ has initial data $v(\cdotp,s_0)=g(\cdotp,s_0)$, $\partial_tv(\cdotp,s_0)=h(\cdotp,s_0)$ and satisfies
the equation in \eqref{1} in the sense of $L^2(\mathbb{R})$.
\end{definition}
By making the change of variable $t\mapsto t-s_0$ on Theorem 2.27  in \cite{Ikawa}, we have
\begin{theorem}\label{existencetheorem}${}$\\
	Let  $m\in\mathbb{N}$. For each $(g(s_0),h(s_0))\in H^{m+2}(\mathbb{R})\times H^{m+1}(\mathbb{R})$, there exists a unique strong solution $v$ to \eqref{1} such that $\supp v(t)\subset\{x\in \mathbb{R};\,\,|x|\leq t+R_0\}$ and	
	$$v\in\bigcap_{j=0}^{m+2}  \mathcal{C}^{m+2-j}\left([s_0,\infty),H^j(\mathbb{R})\right).$$
	\end{theorem}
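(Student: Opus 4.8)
The plan is to reduce the problem with Cauchy data prescribed at the time $s_0$ to the standard case $s_0=0$ by a time translation, and then to invoke the cited well-posedness theorem. Since the potential $V$ is independent of $t$, the function $w(x,t):=v(x,t+s_0)$ solves the same equation $\partial_t^2 w-\partial_x^2 w+V(x)\partial_t w=0$ for $t>0$, with initial data $w(\cdot,0)=g(\cdot,s_0)\in H^{m+2}(\R)$ and $\partial_t w(\cdot,0)=h(\cdot,s_0)\in H^{m+1}(\R)$. Establishing the theorem for $v$ is therefore equivalent to establishing it for $w$ at the base time $0$, which is exactly the content of Theorem 2.27 in \cite{Ikawa}; the stated conclusions for $v$ then follow by undoing the shift $t\mapsto t-s_0$.

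For completeness I would indicate what underlies the well-posedness statement at $s_0=0$. The principal part is the one-dimensional d'Alembertian and the only additional term is $V(x)\partial_t w$, whose coefficient $V(x)=\mu_0(1+x^2)^{-1/2}$ is smooth with all derivatives bounded on $\R$. Existence, uniqueness, and the regularity $w\in\bigcap_{j=0}^{m+2}\mathcal{C}^{m+2-j}([0,\infty),H^j(\R))$ are obtained from the standard hierarchy of energy estimates for linear hyperbolic equations. The basic energy identity obtained by multiplying by $\partial_t w$ and integrating by parts yields
$$\frac{d}{dt}\int_{\R}\Big((\partial_t w)^2+(\partial_x w)^2\Big)\,dx=-2\int_{\R}V(x)(\partial_t w)^2\,dx\le 0,$$
since $V\ge 0$, so the damping term is dissipative and causes no loss of regularity. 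Higher-order estimates follow by differentiating the equation in $x$ and using the equation itself to trade two time derivatives for spatial ones; the commutators produced involve only derivatives of $V$, which are bounded, and are absorbed by Gronwall's inequality.

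The finite propagation speed, and hence the support bound, I would obtain by the local energy method on backward characteristic cones. For a cone $K=\{(x,t):|x-x_0|\le t_1-t\}$ the time derivative of the truncated energy $\int_{|x-x_0|\le t_1-t}\big((\partial_t w)^2+(\partial_x w)^2\big)\,dx$ produces a boundary flux term that is nonpositive because the characteristic speed of the principal part equals $1$, while the contribution of $V(x)\partial_t w$ is again nonpositive; thus data vanishing on the base of the cone force $w$ to vanish throughout $K$. Consequently $w$ propagates at speed at most $1$, so $\supp w(t)\subset\{|x|\le t+(R_0+s_0)\}$ because $g(s_0),h(s_0)$ are supported in $(-R_0-s_0,R_0+s_0)$. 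Translating back to $v$, the support at time $t$ lies in $\{|x|\le (t-s_0)+(R_0+s_0)\}=\{|x|\le t+R_0\}$, which is precisely the claimed bound.

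The main obstacle is in fact organizational rather than analytic: one must verify that $V$ meets the smoothness and boundedness hypotheses required by the cited theorem, and then track the support radius correctly through the shift, checking that the data support $(-R_0-s_0,R_0+s_0)$ together with unit-speed propagation over the interval $[s_0,t]$ yields exactly the radius $t+R_0$ rather than an $s_0$-dependent quantity. The genuine analytic content is carried entirely by Ikawa's theorem, so beyond confirming its applicability there is nothing further to prove.
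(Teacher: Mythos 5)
Your proposal is correct and matches the paper's approach exactly: the paper's entire proof is the time translation $t\mapsto t-s_0$ combined with the citation of Theorem 2.27 in \cite{Ikawa}, which is precisely your first paragraph. Your additional sketches of the underlying energy hierarchy and the finite-propagation-speed argument (including the correct bookkeeping that the support radius $(t-s_0)+(R_0+s_0)$ collapses to $t+R_0$) go beyond what the paper records but are consistent with it.
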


This section is dedicated to the statement and proof of Proposition \ref{prop2.1} and is organized into three subsections:
\begin{itemize}
\item In the first one, we present the two classical energy estimates, obtained by multiplying the equation in \eqref{1} by
 $\partial_tv$ and $v$, respectively. These computations make use of the fact that the initial data $(g(s_0),h(s_0))$ are compactly supported in the interval $(-R_0-s_0,R_0+s_0)$. The combination of these estimates leads to Lemma \ref{LEE}.
 \item The second subsection is devoted to refining these energy estimates, yielding sharper decay bounds.
  \item Subsequently, we combine the previous energy estimates to establish the proof of Proposition \ref{prop2.1}.
\end{itemize}

 %%%%%%%%%%%%%%%%%%%%%%%%%%%%%%%%%%%%%%%%%%%%%%%%%%%%%%%%%%%%%%%%%%
 \subsection{A bound of the $H^1\times L^2$ norm of $(v,\partial_tv)$}
This subsection presents Lemma \ref{LEE}, which based on the construction of a Lyapunov functional. We begin by introducing three crucial functionals as follows:
\begin{eqnarray}
E_0(v(t),\partial_tv(t))&:=&\frac12\displaystyle\int_{\R}\big ((\partial_xv(t))^2+
(\partial_tv(t))^2 \big ) {\mathrm{d}}x,\label{En}\qquad
\\
I(v(t),\partial_tv(t),t)&:=&\displaystyle\int_{\R}\big(v(t)\partial_tv(t)+ \frac{1}{2(t+1)}(v(t))^2+ \frac{V(x)}{2}(v(t))^2
 \big ) {\mathrm{d}}x,\quad \label{F2}\\
   F_{A}(v(t),\partial_tv(t),t)&:=&AE_0(v(t),\partial_tv(t))+\frac{1}{2(t+1)}I(v(t),\partial_tv(t),t),\quad\label{F4}
\end{eqnarray}
where $A>0$ is a constant to be specified later. By evaluating the time derivative of $E_{0}(v(t),\partial_tv(t),t)$, $I(v(t),\partial_tv(t),t)$ and $F_{A}(v(t),\partial_tv(t),t)$, we establish the following result.
\begin{lemma} \label{LEE} 
 Assume
$(g(s_0),h(s_0))\in H^{2}(\mathbb{R})\times H^{1}(\mathbb{R})$ that are compactly supported 
on $(-R_0-s_0,R_0+s_0)$, with $R_0>0$, then there exists a constant $C_0=C_0(R_0,\mu_0)>0$ such that the strong solution $v$ of \eqref{1} satisfies, 
for all   $t\ge s_0$,
\begin{eqnarray*}
	&{}&\frac{\|v(t)\|^2_{L^2}}{(1+t)^{2}}+\frac{\|\sqrt{V}v(t)\|^2_{L^2}}{1+t}
+	\|\partial_tv(t)\|^2_{L^2}+\|\partial_x v(t)\|^2_{L^2}\\
&{}&\leq C_0
\left( 
	\frac{\|g(s_0)\|^2_{L^2}}{(1+s_0)^2}+\frac{\|\sqrt{V}g(s_0)\|^2_{L^2}}{1+s_0}
+	\|h(s_0)\|^2_{L^2}+\|\partial_xg(s_0)\|^2_{L^2}\right).
\end{eqnarray*}
\end{lemma}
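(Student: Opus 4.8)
The plan is to treat the Lyapunov functional $F_A$ from \eqref{F4} as the central object, and to show that for a suitable constant $A=A(R_0,\mu_0)$ it is simultaneously (i) monotone nonincreasing in $t$ and (ii) two-sided comparable to the quantity appearing on the left of the claimed inequality. The decisive structural input is the finite propagation speed furnished by Theorem \ref{existencetheorem}, namely $\supp v(t)\subset\{|x|\le t+R_0\}$, which compensates for the spatial decay of the damping coefficient $V$. Once monotonicity and comparability are in hand, the estimate follows by chaining $F_A(t)\le F_A(s_0)$ between the lower bound at time $t$ and the upper bound at $s_0$.

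First I would differentiate the three functionals along the flow, using the equation in the form $\partial_t^2 v=\partial_x^2 v-V\partial_t v$ and integrating by parts (the compact support kills all boundary terms). This gives $\frac{d}{dt}E_0=-\|\sqrt{V}\partial_t v\|_{L^2}^2\le 0$, and, crucially, the term $\tfrac12\int V v^2$ built into $I$ is designed precisely so that the two contributions $\mp\int_\R V v\,\partial_t v\,dx$ cancel, leaving
$$\frac{d}{dt}I=\|\partial_t v\|_{L^2}^2-\|\partial_x v\|_{L^2}^2-\frac{1}{2(t+1)^2}\|v\|_{L^2}^2+\frac{1}{t+1}\int_\R v\,\partial_t v\,dx.$$
Substituting into $\frac{d}{dt}F_A=-A\|\sqrt{V}\partial_t v\|_{L^2}^2-\frac{1}{2(t+1)^2}I+\frac{1}{2(t+1)}\frac{d}{dt}I$, the $\int v\,\partial_t v$ contributions cancel identically, and after collecting terms one is left with
$$\frac{d}{dt}F_A=-A\|\sqrt{V}\partial_t v\|_{L^2}^2+\frac{1}{2(t+1)}\|\partial_t v\|_{L^2}^2-\frac{1}{2(t+1)}\|\partial_x v\|_{L^2}^2-\frac{1}{2(t+1)^3}\|v\|_{L^2}^2-\frac{1}{4(t+1)^2}\|\sqrt{V}v\|_{L^2}^2.$$

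The only term with the wrong sign is $\frac{1}{2(t+1)}\|\partial_t v\|_{L^2}^2$, and absorbing it is the heart of the matter. Here I would invoke the support property: on $\{|x|\le t+R_0\}$ one has $(1+x^2)^{1/2}\le 1+t+R_0\le (1+R_0)(1+t)$, hence $V(x)\ge \frac{\mu_0}{(1+R_0)(1+t)}$, which yields $\|\sqrt{V}\partial_t v\|_{L^2}^2\ge \frac{\mu_0}{(1+R_0)(1+t)}\|\partial_t v\|_{L^2}^2$. Choosing $A\ge \frac{1+R_0}{2\mu_0}$ then forces $-A\|\sqrt{V}\partial_t v\|_{L^2}^2+\frac{1}{2(t+1)}\|\partial_t v\|_{L^2}^2\le 0$, so every term is nonpositive and $\frac{d}{dt}F_A\le 0$; integrating gives $F_A(t)\le F_A(s_0)$ for all $t\ge s_0$. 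I expect this degeneracy of $V$ at spatial infinity to be the main obstacle: without the light-cone confinement and the resulting bound $V\gtrsim (1+t)^{-1}$ on $\supp v(t)$, the positive term could not be dominated and the functional would not be monotone.

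Finally I would close the argument with the two-sided equivalence of $F_A$ and the target quantity. Applying Young's inequality to the cross term $\frac{1}{2(t+1)}\int_\R v\,\partial_t v\,dx$ (splitting it as a small multiple of $\|\partial_t v\|_{L^2}^2$ plus a small multiple of $\frac{1}{(t+1)^2}\|v\|_{L^2}^2$) shows that, after enlarging $A$ if necessary so that $A>1$, the functional $F_A(t)$ dominates a fixed positive multiple of $\frac{\|v\|_{L^2}^2}{(1+t)^2}+\frac{\|\sqrt{V}v\|_{L^2}^2}{1+t}+\|\partial_t v\|_{L^2}^2+\|\partial_x v\|_{L^2}^2$; the same inequality, evaluated at $t=s_0$, bounds $F_A(s_0)$ from above by a multiple of the data norm on the right-hand side. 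Combining the lower bound at time $t$, the monotonicity $F_A(t)\le F_A(s_0)$, and the upper bound at $s_0$ produces the asserted inequality with a constant $C_0=C_0(R_0,\mu_0)$, both constants tracing back to the choice $A=\max\!\big(2,\tfrac{1+R_0}{2\mu_0}\big)$.
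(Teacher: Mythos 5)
Your proposal is correct and follows essentially the same route as the paper's proof: the same Lyapunov functional $F_A$, the same cancellation of the $\int v\,\partial_t v$ terms in $\frac{d}{dt}F_A$, the same use of finite propagation speed to get $V\gtrsim \mu_0/((1+R_0)(1+t))$ on $\supp v(t)$, and the same Young-inequality equivalence between $F_A$ and the target energy. The only difference is the inessential choice of constant ($A=\max(2,\tfrac{1+R_0}{2\mu_0})$ versus the paper's $A=\max(5/4,\tfrac{1+R_0}{2\mu_0})$).
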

\begin{proof}	
Since
$(g(s_0),h(s_0))\in H^{2}(\mathbb{R})\times H^{1}(\mathbb{R})$, Theorem \ref{existencetheorem} guarantees that the solution $v$ to problem \eqref{1} satisfies 
$v\in \bigcap_{j=0}^{2}  \mathcal{C}^{j}\left([s_0,\infty),H^{2-j}(\mathbb{R})\right)$. As a consequence, the energy functional $E_0(v(t),\partial_tv(t))$ is a differentiable with respect to time. Moreover, by multiplying the equation in  \eqref{1} by $\partial_tv(t)$ and  integrating by parts, we get
\begin{equation}\label{E00}
 \frac{d}{dt}E_0(v(t),\partial_tv(t))=-\displaystyle\int_{\R}
V(x)(\partial_tv(t))^2  {\mathrm{d}}x,\qquad\hbox{for all}\,\, t>s_0.
\end{equation}
Similarly, the functional $I(v(t),\partial_tv(t),t)$ is differentiable with respect to time. Proceeding in the same manner, by multiplying the equation in \eqref{1} by $v(t)$ and applying integration by parts, we obtain, for all $t>s_0$,
\begin{eqnarray}\label{F1}
 \frac{d}{dt}I(v(t),\partial_tv(t),t)&=&\displaystyle\int_{\R}
(\partial_tv(t))^2  {\mathrm{d}}x
-\displaystyle\int_{\R}
(\partial_x v(t))^2  {\mathrm{d}}x\\
&&-\frac{1}{2(1+t)^2}\displaystyle\int_{\R}
(v(t))^2  {\mathrm{d}}x+\frac{1}{1+t}\displaystyle\int_{\R}
v(t)\partial_tv(t)  {\mathrm{d}}x.\nonumber
\end{eqnarray}
On the other hand, by making use of identities \eqref{E00}, \eqref{F1}, along with the definition of  $F_A(v(t),\partial_tv(t),t)$ given by \eqref{F4}, we  infer that
\begin{eqnarray*}
 \frac{d}{dt}F_{A}(v(t),\partial_tv(t),t)&=&- A\displaystyle\int_{\R}
V(x)(\partial_tv(t))^2  {\mathrm{d}}x+\frac{1}{2(1+t)}\displaystyle\int_{\R}
(\partial_tv(t))^2  {\mathrm{d}}x
-\frac{1}{2(1+t)}\displaystyle\int_{\R}
(\partial_x v(t))^2  {\mathrm{d}}x\\
&&-\frac{1}{2(1+t)^3}\displaystyle\int_{\R}
(v(t))^2  {\mathrm{d}}x
-\frac{1}{4(1+t)^2 }\displaystyle\int_{\R}
V(x)(v(t))^2  {\mathrm{d}}x\\
&\le &- A\displaystyle\int_{\R}
V(x)(\partial_tv(t))^2  {\mathrm{d}}x+\frac{1}{2(1+t)}\displaystyle\int_{\R}
(\partial_tv(t))^2  {\mathrm{d}}x.
\end{eqnarray*}
Using the fact that the support of $v$ is contained within the region $|x|\leq R_0+t$, we have
$$V(x)\geq \frac{\mu_0}{\sqrt{1+(R_0+t)^2}}\geq\frac{\mu_0}{R_0+1+t}\geq\frac{\mu_0}{(1+R_0)(1+t)},$$
for any $t\geq s_0$. Therefore, for all $A\geq (1+R_0)/2\mu_0$, it follows that
\begin{eqnarray*}
 \frac{d}{dt}F_{A}(v(t),\partial_tv(t),t)&\le &- \left(\frac{A \mu_0}{1+R_0}-\frac{1}{2}\right)\int_{\R}\frac{(\partial_t v(t))^2}{1+t}{\mathrm{d}}x\leq 0.
\end{eqnarray*}
Hence, by integrating over the interval $[s_0,t]$  we arrive at
\begin{equation}\label{F19B}
F_{A}(v(t),\partial_tv(t),t) \leq   F_{A}(v(s_0),\partial_tv(s_0),s_0),\quad\hbox{for  any}\,\,  t\ge s_0.
\end{equation}
Moreover, by applying Young's inequality $|ab|\leq \frac{1}{4}a^2+b^2$ with $a=v/(1+t)$, $b=\partial_tv$, we obtain
$$\frac{1}{2}\displaystyle\int_{\R}\frac{v(t)\partial_t v(t)}{1+t}{\mathrm{d}}x\geq -\frac{1}{8}\int_{\R}\frac{(v(t))^2}{(1+t)^2}{\mathrm{d}}x-\frac{1}{2}\int_{\R}(\partial_t v(t))^2{\mathrm{d}}x,$$
which implies,
\begin{eqnarray*}
F_A(v(t),\partial_tv(t),t)&\geq& \frac{A}{2} \displaystyle\int_{\R}(\partial_x v(t))^2  {\mathrm{d}}x+\frac{A-1}{2}\displaystyle\int_{\R}(\partial_tv(t))^2  {\mathrm{d}}x\\
&&+\frac{1}{8}\displaystyle\int_{\R}\frac{(v(t))^2}{(1+t)^2}  {\mathrm{d}}x+\frac{1}{4}\displaystyle\int_{\R}\frac{V(x)(v(t))^2}{1+t}  {\mathrm{d}}x.
\end{eqnarray*}
By choosing $A=\max (5/4,(1+R_0)/2\mu_0)$, this leads to
\begin{equation}\label{F17B}
F_A(v(t),\partial_tv(t),t)\geq \frac18 \displaystyle\int_{\R}\left(
\frac{(v(t))^2}{(t+1)^2}+\frac{V(x)(v(t))^2}{t+1}+
(\partial_x v(t))^2+(\partial_tv(t))^2 \right) {\mathrm{d}}x,\,\,\hbox{for all}\,\,t\geq s_0.
\end{equation}
Alternatively, using the standard Young's inequality, we conclude that
\begin{equation}\label{F18B}
F_A(v(t),\partial_tv(t),t)\leq \frac{2A+1}{4}\displaystyle\int_{\R}\left(
\frac{(v(t))^2}{(t+1)^2}+\frac{V(x)(v(t))^2}{t+1}+(\partial_x v(t))^2+(\partial_tv(t))^2 \right) {\mathrm{d}}x,\,\,\hbox{for all}\,\,t\geq s_0.
\end{equation}
From \eqref{F17B} and \eqref{F18B}, there exists a constant $C=C(R_0,\mu_0 )$ such that for all $t\ge s_0$,
\begin{equation} \label{F15B}
C^{-1} F_A(v(t),\partial_tv(t),t)\le
\displaystyle\int_{\R}\left(
\frac{(v(t))^2}{(t+1)^2}+\frac{V(x)(v(t))^2}{t+1}+
(\partial_x v(t))^2+(\partial_tv(t))^2
 \right) {\mathrm{d}}x\le C F_A(v(t),\partial_tv(t),t).
\end{equation}
Combining inequalities \eqref{F19B} and \eqref{F15B}, the proof of Lemma \ref{LEE} is complete.
\end{proof}

\subsection{Decay estimates of the $H^1\times L^2$ norm of $(v,\partial_tv)$ for $H^2\times H^1$ data}
To refine the decay estimate from Lemma  \ref{LEE},  it is necessary to introduce a different set of energy functionals depending on the parameter $\mu_0$. This leads us to consider two distinct cases.\\

{\bf First case:  $\mu_0 \in (0,1]$.}\\ 
Let $\mu\in(0,\mu_0)$. We introduce the following functionals:
\begin{eqnarray}
E_1(v(t),\partial_tv(t),t)&:=&\displaystyle\int_{\R}\big(v(t)\partial_tv(t)+ \frac{1-\mu}{2(t+1)}(v(t))^2+ \frac{V(x)}{2}(v(t))^2
 \big) {\mathrm{d}}x,\quad \label{In}\\
 E_2(v(t),\partial_tv(t),t)&:=&E_0(v(t),\partial_tv(t))+\frac{\mu  }{2(t+1)}E_1(v(t),\partial_tv(t),t),\quad\label{E3}
 \end{eqnarray}
for all $t\geq s_0$, where $E_0(v(t),\partial_tv(t))$ is given by \eqref{En}. By evaluating the time derivative of $E_i(v(t),\partial_tv(t),t)$, for $i=1,2$,  we are led to the following lemma.
\begin{lemma} \label{LE} 
Let  $\mu_0 \in (0,1]$ and $\mu\in (0,\mu_0)$. Assume
$(g(s_0),h(s_0))\in H^{2}(\mathbb{R})\times H^{1}(\mathbb{R})$ that are compactly supported 
on $(-R_0-s_0,R_0+s_0)$, with $R_0>0$, then there exists a constant $C_1=C_1(R_0,\mu,\mu_0)>0$ such that the strong solution $v$ of \eqref{1} satisfies, 
for all   $t\ge s_0$,
\begin{eqnarray} \label{A1}
	&&\frac{\|v(t)\|^2_{L^2}}{(1+t)^{2}}+\frac{\|\sqrt{V}v(t)\|^2_{L^2}}{1+t}+	\|\partial_tv(t)\|^2_{L^2}+\|\partial_x v(t)\|^2_{L^2}\nonumber\\
	&&\leq C_1
 \left(\frac{1+s_0}{1+t}\right)^\mu
\left( \frac{\|g(s_0)\|^2_{L^2}}{(1+s_0)^2}+\frac{\|\sqrt{V}g(s_0)\|^2_{L^2}}{1+s_0}+\|h(s_0)\|^2_{L^2}+\|\partial_x g(s_0)\|^2_{L^2}\right).
\end{eqnarray}
\end{lemma}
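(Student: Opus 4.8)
The plan is to imitate the Lyapunov-functional argument of Lemma \ref{LEE}, but now with the $\mu$-dependent functionals \eqref{In}--\eqref{E3} tuned so that the \emph{weighted} energy $(1+t)^\mu E_2$ is monotone decreasing. First I would differentiate $E_1$. Writing $E_1 = I - \frac{\mu}{2(t+1)}\int_\R v^2\,dx$ and combining \eqref{F1} with $\frac{d}{dt}\big(\frac{1}{2(t+1)}\int_\R v^2\,dx\big) = -\frac{1}{2(t+1)^2}\int_\R v^2\,dx + \frac{1}{t+1}\int_\R v\,\partial_t v\,dx$, I obtain
$$\frac{d}{dt}E_1 = \int_\R (\partial_t v)^2\,dx - \int_\R(\partial_x v)^2\,dx - \frac{1-\mu}{2(t+1)^2}\int_\R v^2\,dx + \frac{1-\mu}{t+1}\int_\R v\,\partial_t v\,dx.$$
Then, using $E_2=E_0+\frac{\mu}{2(t+1)}E_1$ and \eqref{E00}, I would evaluate $\frac{d}{dt}[(1+t)^\mu E_2]=(1+t)^\mu\big(\frac{\mu}{1+t}E_2+\frac{d}{dt}E_2\big)$. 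The pleasant point is that after substitution the gradient integrals $\int(\partial_x v)^2$ and the cross integrals $\int v\,\partial_t v$ cancel identically, leaving only
$$\frac{\mu}{1+t}E_2+\frac{d}{dt}E_2 = \int_\R\Big(\tfrac{\mu}{1+t}-V(x)\Big)(\partial_t v)^2\,dx - \frac{\mu(1-\mu)(2-\mu)}{4(t+1)^3}\int_\R v^2\,dx - \frac{\mu(1-\mu)}{4(t+1)^2}\int_\R V(x) v^2\,dx.$$

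Second, I would exploit the damping to sign this expression. The last two integrals have nonnegative coefficients (since $0<\mu<1$) and are discarded. For the first, the finite-speed-of-propagation support property $\supp v(t)\subset\{|x|\le R_0+t\}$ from Theorem \ref{existencetheorem} gives $V(x)\ge \mu_0/(1+R_0+t)$ on $\supp v(t)$, so $\frac{\mu}{1+t}-V(x)\le 0$ there as soon as $\mu_0(1+t)\ge\mu(1+R_0+t)$, that is for $t\ge t_1:=\max\{0,\frac{\mu R_0}{\mu_0-\mu}-1\}$. This is precisely where the strict hypothesis $\mu<\mu_0$ enters (for $\mu=\mu_0$ no such threshold exists when $R_0\ge1$). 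Consequently $\frac{d}{dt}[(1+t)^\mu E_2]\le0$ for all $t\ge t_1$.

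Third comes the coercivity, i.e.\ the equivalence $c^{-1}E_2\le \frac{\|v\|_{L^2}^2}{(1+t)^2}+\frac{\|\sqrt V v\|_{L^2}^2}{1+t}+\|\partial_t v\|_{L^2}^2+\|\partial_x v\|_{L^2}^2\le cE_2$, analogous to \eqref{F15B}. The upper bound is immediate from Young's inequality. For the lower bound the gradient term $\frac12\int(\partial_x v)^2$ and the weighted-mass term $\frac{\mu}{4(t+1)}\int V v^2$ are directly present, and the cross term $\frac{\mu}{2(t+1)}\int v\,\partial_t v$ is absorbed by $\frac12\int(\partial_t v)^2$ together with the mass terms. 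I expect this to be the main obstacle: when $\mu$ is close to $1$ the coefficient $\mu(1-\mu)$ of $\int v^2$ in $E_1$ degenerates and cannot by itself dominate the mass produced by the cross term. Here I would again use $V(x)\ge\frac{\mu}{1+t}$ on $\supp v(t)$ for $t\ge t_1$, so that $\int V v^2\ge\frac{\mu}{1+t}\int v^2$ and the weighted-mass term controls $\frac{\|v\|_{L^2}^2}{(1+t)^2}$, rendering the relevant quadratic form positive definite uniformly in $\mu\in(0,\mu_0)$.

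Finally I would integrate and reassemble, tracking the factor $\big(\tfrac{1+s_0}{1+t}\big)^\mu$. Setting $\sigma:=\max\{s_0,t_1\}$, integrating $\frac{d}{dt}[(1+t)^\mu E_2]\le0$ on $[\sigma,t]$ yields $(1+t)^\mu E_2(t)\le(1+\sigma)^\mu E_2(\sigma)$, which via the coercivity becomes the claimed decay of the weighted norm. On the bounded interval $s_0\le t\le t_1$ (nonempty only if $s_0<t_1$) I would simply invoke Lemma \ref{LEE} to bound the weighted norm at $t$ by its value at $s_0$; since $\big(\tfrac{1+s_0}{1+t}\big)^\mu\ge(1+t_1)^{-\mu}$ there, the loss is a constant depending only on $t_1$. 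Bounding $E_2(\sigma)$ back to the data at $s_0$ by Lemma \ref{LEE}, and using $\frac{1+t_1}{1+t}\le(1+t_1)\frac{1+s_0}{1+t}$ to rewrite $\big(\tfrac{1+\sigma}{1+t}\big)^\mu$ in terms of $\big(\tfrac{1+s_0}{1+t}\big)^\mu$, produces a constant $C_1=C_1(R_0,\mu,\mu_0)$ and completes the proof of \eqref{A1}.
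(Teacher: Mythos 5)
Your proposal is correct and follows essentially the same route as the paper's proof: the same functionals $E_1,E_2$, the same identity for $\frac{d}{dt}E_2+\frac{\mu}{1+t}E_2$ (including the cancellation of the gradient and cross terms), the same use of the finite-propagation support to get $V(x)\ge\mu/(1+t)$ past a threshold time, the same trick of reinvesting this bound to restore coercivity of $E_2$ when $\mu>1/2$, and the same case splitting with Lemma \ref{LEE} covering the initial interval $[s_0,\max\{s_0,t_0\}]$. Nothing essential differs.
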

\begin{proof}	
By multiplying the equation in \eqref{1} by $v(t)$ and applying integration by parts, we obtain the following identity for all $t>s_0$,
\begin{eqnarray}\label{E1}
 \frac{d}{dt}E_1(v(t),\partial_tv(t),t)&=&\displaystyle\int_{\R}
(\partial_tv(t))^2  {\mathrm{d}}x
-\displaystyle\int_{\R}
(\partial_x v(t))^2  {\mathrm{d}}x\\
&&-\frac{1-\mu}{2(1+t)^2}\displaystyle\int_{\R}
(v(t))^2  {\mathrm{d}}x+\frac{1-\mu}{1+t}\displaystyle\int_{\R}
v(t)\partial_tv(t)  {\mathrm{d}}x.\nonumber
\end{eqnarray}
By exploiting \eqref{E00}, \eqref{E1} and  the definition of $E_2(v(t),\partial_tv(t),t)$ given in \eqref{E3}, we  infer that
\begin{eqnarray*}
 \frac{d}{dt}E_2(v(t),\partial_tv(t),t)&=&- \displaystyle\int_{\R}
V(x)(\partial_tv(t))^2  {\mathrm{d}}x+\frac{\mu}{2(1+t)}\displaystyle\int_{\R}
(\partial_tv(t))^2  {\mathrm{d}}x
-\frac{\mu}{2(1+t)}\displaystyle\int_{\R}
(\partial_x v(t))^2  {\mathrm{d}}x\\
&&-\frac{\mu(1-\mu)}{2(1+t)^3}\displaystyle\int_{\R}
(v(t))^2  {\mathrm{d}}x-\frac{\mu^2}{2(1+t)^2}\displaystyle\int_{\R}
v(t)\partial_tv(t)  {\mathrm{d}}x\\
&&-\frac{\mu}{4(1+t)^2 }\displaystyle\int_{\R}
V(x)(v(t))^2  {\mathrm{d}}x.
\end{eqnarray*}
Therefore,
\begin{eqnarray*}
 \frac{d}{dt}E_2(v(t),\partial_tv(t),t)+\frac{\mu}{1+t} 
 E_2(v(t),\partial_tv(t),t)&=&- \displaystyle\int_{\R}V(x)
(\partial_tv(t))^2  {\mathrm{d}}x+\frac{\mu}{1+t} \displaystyle\int_{\R}
(\partial_tv(t))^2  {\mathrm{d}}x\\
&&-\frac{\mu(1-\mu)(2-\mu)}{4(1+t)^3}\displaystyle\int_{\R}
(v(t))^2  {\mathrm{d}}x\\
&&-\frac{\mu (1-\mu)}{4(1+t)^2 }
\displaystyle\int_{\R}
V(x)(v(t))^2  {\mathrm{d}}x\\
&\leq&- \displaystyle\int_{\R}V(x)
(\partial_tv(t))^2  {\mathrm{d}}x+\frac{\mu}{1+t} \displaystyle\int_{\R}
(\partial_tv(t))^2  {\mathrm{d}}x.
\end{eqnarray*}
Using the fact that $\supp(v)\subseteq \{|x| \le R_0+t\}$, we have
 $V(x)\ge \frac{\mu_0}{\sqrt{1+(R_0+t)^2}}\ge \frac{\mu_0}{R_0+1+t}$, then there exists $t_0=t_0(R_0,\mu,\mu_0)=\frac{\mu R_0}{\mu_0-\mu}>0$ such that $V(x)\ge \mu/(1+t)$ for all $t\geq t_0$. Hence, for all $t\ge \tau_0:=\max\{s_0,t_0\}$, it follows that
\begin{equation}\label{E12}
 \frac{d}{dt}E_2(v(t),\partial_tv(t),t)+\frac{\mu}{1+t} 
 E_2(v(t),\partial_tv(t),t)\leq - \displaystyle\int_{\R}\left(V(x)-\frac{\mu}{1+t}\right)
(\partial_tv(t))^2  {\mathrm{d}}x\leq 0.
\end{equation}
Multiplying \eqref{E12} by  $(1+t)^{\mu}$ and integrating over the interval $[\tau_0, t]$, we deduce that, for all $t\ge \tau_0$,
 \begin{equation}\label{E14}
 (1+t)^{\mu} E_2(v(t),\partial_tv(t),t)\leq
 (1+\tau_0)^{\mu} E_2(v(\tau_0),\partial_tv(\tau_0),\tau_0).
\end{equation}
In addition, applying Young's inequality,  we obtain
$$\frac{\mu}{2}\displaystyle\int_{\R}\frac{v(t)\partial_t v(t)}{1+t}{\mathrm{d}}x\geq -\frac{\mu}{8}\int_{\R}\frac{(v(t))^2}{(1+t)^2}{\mathrm{d}}x-\frac{\mu}{2}\int_{\R}(\partial_t v(t))^2{\mathrm{d}}x.$$
Thus, for all $t\geq s_0$, we have
\begin{eqnarray*}
E_2(v(t),\partial_tv(t),t)&\geq& \frac{1}{2}\displaystyle\int_{\R}(\partial_x v(t))^2  {\mathrm{d}}x+\frac{1-\mu}{2}\displaystyle\int_{\R}(\partial_tv(t))^2  {\mathrm{d}}x\\
&&+\frac{\mu-2\mu^2}{8}\displaystyle\int_{\R}\frac{(v(t))^2}{(1+t)^2}  {\mathrm{d}}x+\frac{\mu}{8}\displaystyle\int_{\R}\frac{V(x)(v(t))^2}{1+t}  {\mathrm{d}}x+\frac{\mu}{8}\displaystyle\int_{\R}\frac{V(x)(v(t))^2}{1+t}  {\mathrm{d}}x.
\end{eqnarray*}
Plugging the inequality $V(x)\ge {\mu}/{(1+t)}$ valid for all $t\geq t_0$ into the above estimate, we obtain
$$E_2(v(t),\partial_tv(t),t)\geq \frac{1}{2}\displaystyle\int_{\R}(\partial_x v(t))^2  {\mathrm{d}}x+\frac{1-\mu}{2}\displaystyle\int_{\R}(\partial_tv(t))^2  {\mathrm{d}}x+\frac{\mu(1-\mu)}{8}\displaystyle\int_{\R}\frac{(v(t))^2}{(1+t)^2}  {\mathrm{d}}x+\frac{\mu}{8}\displaystyle\int_{\R}\frac{V(x)(v(t))^2}{1+t}  {\mathrm{d}}x,$$
for all $t\geq \tau_0$. Consequently, we have  
\begin{equation}\label{E17}
E_2(v(t),\partial_tv(t),t)\geq C\displaystyle\int_{\R}\left(
\frac{(v(t))^2}{(t+1)^2}+\frac{V(x)(v(t))^2}{t+1}+
(\partial_t v(t))^2+
(\partial_xv(t))^2 \right) {\mathrm{d}}x,\,\,\hbox{for all}\,t\geq \tau_0. 
\end{equation}
On the other hand, using Young's inequality, we conclude that a constant $C=C(\mu )$ exists such that
\begin{equation}\label{E18}
E_2(v(t),\partial_tv(t),t)\leq C\displaystyle\int_{\R}\left(
\frac{(v(t))^2}{(t+1)^2}+\frac{V(x)(v(t))^2}{t+1}+
(\partial_t v(t))^2+(\partial_xv(t))^2 \right) {\mathrm{d}}x,\,\,\hbox{for all}\,t\geq s_0.
\end{equation}
Combining \eqref{E17} and \eqref{E18}, it follows that there exists $C=C(\mu )$ such that  
\begin{equation} \label{E15}
C^{-1} E_2(v(t),\partial_tv(t),t)\le
\displaystyle\int_{\R}\left(
\frac{(v(t))^2}{(t+1)^2}+\frac{V(x)(v(t))^2}{t+1}+(\partial_tv(t))^2+
(\partial_x v(t))^2
 \right) {\mathrm{d}}x\le C E_2(v(t),\partial_tv(t),t),
\end{equation}
for all $t\ge \tau_0$. In conclusion, by using \eqref{E14}, and \eqref{E15},  we conclude that
\begin{eqnarray} \label{E16}
&&\displaystyle\int_{\R}\left(\frac{(v(t))^2}{(t+1)^2}+\frac{V(x)(v(t))^2}{t+1}+(\partial_t
v(t))^2+
(\partial_xv(t))^2
 \right) {\mathrm{d}}x\notag\\
 &&\le C^2
 \left(\frac{1+s_0}{1+t}\right)^{\mu}\left( \frac{\|g(s_0)\|^2_{L^2}}{(1+s_0)^2}+\frac{\|\sqrt{V}g(s_0)\|^2_{L^2}}{1+s_0}
+	\|h(s_0)\|^2_{L^2}+\|\partial_x g(s_0)\|^2_{L^2}\right),
\end{eqnarray}
for all $t\geq s_0$ when $s_0\ge t_0$, while, if $ s_0\le t_0$, then
\begin{eqnarray} \label{E166}
&&\displaystyle\int_{\R}\left(\frac{(v(t))^2}{(t+1)^2}+\frac{V(x)(v(t))^2}{t+1}+(\partial_t
v(t))^2+
(\partial_xv(t))^2
 \right) {\mathrm{d}}x\notag\\
 &&\le C^2
 \left(\frac{1+\tau_0}{1+t}\right)^{\mu}\displaystyle\int_{\R}\left(
 \frac{(v(\tau_0))^2}{(\tau_0+1)^2}
+\frac{V(x)(v(\tau_0))^2}{\tau_0+1} + (\partial_t
v(\tau_0))^2+
(\partial_xv(\tau_0))^2 \right) {\mathrm{d}}x\notag\\
 &&\le C^2\tilde{C_1} 
 \left(\frac{1+s_0}{1+t}\right)^{\mu}\left( 	\frac{\|g(s_0)\|^2_{L^2}}{(1+s_0)^2}+\frac{\|\sqrt{V}g(s_0)\|^2_{L^2}}{1+s_0}
+	\|h(s_0)\|^2_{L^2}+\|\partial_x g(s_0)\|^2_{L^2}\right),
\end{eqnarray}
for all $t\geq t_0$, where we have used Lemma \ref{LEE} with $\tilde{C_1}=C_0(1+t_0)^\mu=C_0\left(1+\frac{\mu R_0}{\mu_0-\mu}\right)^\mu$. Furthermore, when $s_0\leq t_0$, by exploiting Lemma \ref{LEE}, we deduce that for all $ t \in[s_0, t_0]$,
\begin{eqnarray}\label{E16C}
&&\frac{\|v(t)\|^2_{L^2}}{(1+t)^{2}}+\frac{\|\sqrt{V}v(t)\|^2_{L^2}}{1+t}
+	\|\partial_tv(t)\|^2_{L^2}+\|\partial_x v(t)\|^2_{L^2}\notag\\
&&\leq \tilde{C_1} \left(\frac{1+s_0}{1+t}\right)^{\mu}
\left( 	\frac{\|g(s_0)\|^2_{L^2}}{(1+s_0)^2}+\frac{\|\sqrt{V}g(s_0)\|^2_{L^2}}{1+s_0}
+	\|h(s_0)\|^2_{L^2}+\|\partial_x g(s_0)\|^2_{L^2}\right).
 \end{eqnarray}
Combining \eqref{E16}, \eqref{E166}, and \eqref{E16C}, the proof of Lemma \ref{LE} is complete.
\end{proof}	

{\bf Second case:  $\mu_0 \in (1,\infty)$.}\\ 
We   introduce  the following functionals for all $t\geq s_0$:
\begin{eqnarray*}
E_3(v(t),\partial_tv(t))&:=&\displaystyle\int_{\R}\big(v(t)\partial_tv(t)+ \frac{V(x)}{2}(v(t))^2
 \big){\mathrm{d}}x,\\
 E_4(v(t),\partial_tv(t),t)&:=&E_0(v(t),\partial_tv(t))+\frac{1}{2(t+1)}E_3(v(t),\partial_tv(t)),
\end{eqnarray*}
where $E_0(v(t),\partial_tv(t))$ is defined in \eqref{En}.
\begin{lemma} \label{LEA} 
Let $\mu_0>1$. Assume
$(g(s_0),h(s_0))\in H^{2}(\mathbb{R})\times H^{1}(\mathbb{R})$ that are compactly supported 
on $(-R_0-s_0,R_0+s_0)$, with $R_0>0$, then there exists a constant $C_2=C_2(R_0,\mu_0)>0$ such that the strong solution $v$ of \eqref{1} satisfies, 
for all   $t\ge s_0$,
\begin{eqnarray} \label{A1A}
	&&\frac{\|v(t)\|^2_{L^2}}{(1+t)^{2}}+\frac{\|\sqrt{V}v(t)\|^2_{L^2}}{1+t}+	\|\partial_tv(t)\|^2_{L^2}+\|\partial_x v(t)\|^2_{L^2}\nonumber\\
	&&\leq C
 \left(\frac{1+s_0}{1+t}\right)
\left( \frac{\|g(s_0)\|^2_{L^2}}{(1+s_0)^2}+\frac{\|\sqrt{V}g(s_0)\|^2_{L^2}}{1+s_0}+\|h(s_0)\|^2_{L^2}+\|\partial_x g(s_0)\|^2_{L^2}\right).
\end{eqnarray}
\end{lemma}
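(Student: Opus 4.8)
The plan is to reproduce the scheme of Lemma \ref{LE}, but now at the borderline decay rate $(1+t)^{-1}$, which is the relevant one since $\mu_0>1$ forces $\alpha_0=\min(\mu_0,1)=1$. Concretely, I would let the pair $(E_3,E_4)$ play the role that $(E_1,E_2)$ and the parameter $\mu$ had there, with $\mu$ frozen at the value $1$. First, multiplying the equation in \eqref{1} by $v(t)$ and integrating by parts gives the identity $\frac{d}{dt}E_3(v(t),\partial_tv(t))=\int_{\R}(\partial_tv)^2\,dx-\int_{\R}(\partial_xv)^2\,dx$, which is exactly \eqref{E1} with the $\mu$-dependent terms removed, since $E_3$ carries no $\frac{1-\mu}{2(t+1)}v^2$ weight. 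Combining this with \eqref{E00} and the definition of $E_4$, a direct computation yields, after adding $\frac{1}{1+t}E_4$, the clean cancellation $\frac{d}{dt}E_4+\frac{1}{1+t}E_4=-\int_{\R}\big(V(x)-\tfrac{1}{1+t}\big)(\partial_tv)^2\,dx$.

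Next I would invoke the support localization $\supp v(t)\subseteq\{|x|\le R_0+t\}$ from Theorem \ref{existencetheorem}, which gives $V(x)\ge \mu_0/(R_0+1+t)$ on the support. Since $\mu_0>1$, the threshold inequality $V(x)\ge 1/(1+t)$ holds for all $t\ge t_0:=R_0/(\mu_0-1)$; it is precisely here that the strict condition $\mu_0>1$ is used, as for $\mu_0=1$ this bound would fail. Hence for $t\ge\tau_0:=\max\{s_0,t_0\}$ the right-hand side above is nonpositive, so $\frac{d}{dt}\big[(1+t)E_4\big]\le 0$, and integrating over $[\tau_0,t]$ gives $(1+t)E_4(v(t),\partial_tv(t),t)\le (1+\tau_0)E_4(v(\tau_0),\partial_tv(\tau_0),\tau_0)$.

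The main obstacle is the coercivity of $E_4$, i.e. the analogue of \eqref{E17}. At $\mu=1$ the computation of Lemma \ref{LE} degenerates: the naive Young inequality $|ab|\le\frac14 a^2+b^2$ applied to $\frac{1}{2(1+t)}\int v\,\partial_tv$ would cancel the entire $\frac12\|\partial_tv\|_{L^2}^2$ coming from $E_0$, leaving no control on $\partial_tv$. To repair this I would instead use the weighted Young inequality $\frac{|v\,\partial_tv|}{1+t}\le\frac{\delta}{2}(\partial_tv)^2+\frac{1}{2\delta}\frac{v^2}{(1+t)^2}$ with a parameter $\delta\in(1,2)$, and split the good term as $\frac{1}{4(1+t)}\int Vv^2=\frac{\lambda}{4(1+t)}\int Vv^2+\frac{1-\lambda}{4(1+t)}\int Vv^2$; bounding $V\ge 1/(1+t)$ in the first piece absorbs the negative $\frac{v^2}{(1+t)^2}$ contribution provided $\frac1\delta<\lambda<1$, a range that is nonempty exactly because $\delta>1$. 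This yields, for $t\ge t_0$, an estimate $E_4\ge C\int_{\R}\big(\frac{v^2}{(t+1)^2}+\frac{Vv^2}{t+1}+(\partial_tv)^2+(\partial_xv)^2\big)\,dx$ with all four coefficients strictly positive, while the matching upper bound $E_4\le C\int_{\R}(\cdots)\,dx$ for all $t\ge s_0$ follows from a plain Young inequality. Finally, I would assemble exactly as in Lemma \ref{LE}: when $s_0\ge t_0$ the conclusion is immediate from the integrated bound and this two-sided equivalence; when $s_0<t_0$, I would split $[s_0,\infty)=[s_0,t_0]\cup[t_0,\infty)$, control the compact part $[s_0,t_0]$ by Lemma \ref{LEE} and propagate the decay on $[t_0,\infty)$, the two constants combining into $C_2=C_0(1+t_0)$, which produces the stated $\big(\tfrac{1+s_0}{1+t}\big)$ decay.
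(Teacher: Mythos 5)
Your proposal is correct and follows essentially the same route as the paper's proof: the same identity $\frac{d}{dt}E_4+\frac{1}{1+t}E_4=-\int_{\R}\big(V-\frac{1}{1+t}\big)(\partial_tv)^2\,dx$, the same threshold time $R_0/(\mu_0-1)$ where $\mu_0>1$ enters, the integration of $(1+t)E_4$, and the same coercivity repair via a weighted Young inequality combined with splitting the $\frac{1}{4(1+t)}\int Vv^2$ term and using $V\ge 1/(1+t)$ on part of it. The paper merely fixes the numerical choices (its $\varepsilon=5/8$ corresponds to your $\delta=8/5\in(1,2)$, and its split $\frac{1}{16}+\frac{3}{16}$ corresponds to $\lambda=3/4>1/\delta$) where you keep general parameters $\delta,\lambda$, and it concludes exactly as you do, patching $[s_0,t_1]$ with Lemma \ref{LEE} when $s_0<t_1$.
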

\begin{proof}	
By multiplying the differential equation in \eqref{1} by $v(t)$, and applying integration by parts, we obtain the following for all $t>s_0$,
\begin{equation}\label{E3A}
 \frac{d}{dt}E_3(v(t),\partial_tv(t))=\int_{\R}(\partial_tv(t))^2  {\mathrm{d}}x-\int_{\R}(\partial_x v(t))^2  {\mathrm{d}}x.
\end{equation}
Hence,  by exploiting  \eqref{E00}, \eqref{E3A} and  the definition of $E_4(v(t),\partial_tv(t),t)$, we  infer that
\begin{eqnarray*}
 \frac{d}{dt}E_4(v(t),\partial_tv(t),t)&=&- \displaystyle\int_{\R}
V(x)(\partial_tv(t))^2  {\mathrm{d}}x+\frac{1}{2(1+t)}\displaystyle\int_{\R}
(\partial_tv(t))^2  {\mathrm{d}}x
-\frac{1}{2(1+t)}\displaystyle\int_{\R}
(\partial_x v(t))^2  {\mathrm{d}}x\\
&&-\frac{1}{2(1+t)^2}\displaystyle\int_{\R}
v(t)\partial_tv(t)  {\mathrm{d}}x-\frac{1}{4(1+t)^2 }\displaystyle\int_{\R}
V(x)(v(t))^2  {\mathrm{d}}x.
\end{eqnarray*}
Consequently,
$$
 \frac{d}{dt}E_4(v(t),\partial_tv(t),t)+\frac{1}{1+t} 
 E_4(v(t),\partial_tv(t),t)=- \displaystyle\int_{\R}\left(V(x)-\frac{1}{1+t} \right)
(\partial_tv(t))^2  {\mathrm{d}}x.
$$
Noting that $\supp(v)\subseteq \{|x| \le R_0+t\}$, it follows that
 $V(x)\ge \frac{\mu_0}{\sqrt{1+(R_0+t)^2}}\ge \frac{\mu_0}{R_0+1+t}$. Hence, there exists $t_1=t_1(R_0,\mu_0)=\frac{R_0}{\mu_0-1}>0$ such that $V(x)\ge 1/(1+t)$ for all $t\geq t_1$. Therefore, we obtain
 \begin{equation}\label{E12A}
 \frac{d}{dt}E_4(v(t),\partial_tv(t),t)+\frac{1}{1+t} 
 E_4(v(t),\partial_tv(t),t)\leq 0,\quad\hbox{for all}\,\,t\ge \tau_1:=\max\{s_0,t_1\}.
\end{equation}
Multiplying \eqref{E12A} by  $(1+t)$ and integrating over $[\tau_1, t]$, we deduce that, for all $t\ge \tau_1$,
 \begin{equation}\label{E14A}
 (1+t) E_4(v(t),\partial_tv(t),t)\leq
 (1+\tau_1) E_4(v(\tau_1),\partial_tv(\tau_1),\tau_1).
\end{equation}
In addition, applying the $\varepsilon$-Young's inequality, we have
$$\frac{1}{2}\displaystyle\int_{\R}\frac{v(t)\,\partial_t v(t)}{1+t}{\mathrm{d}}x\geq -\frac{\varepsilon}{4}\int_{\R}\frac{(v(t))^2}{(1+t)^2}{\mathrm{d}}x-\frac{1}{4\varepsilon}\int_{\R}(\partial_t v(t))^2{\mathrm{d}}x.$$
Thus, using  the fact that $V(x)\ge {1}/{(1+t)}$ for all $t\geq t_1$, we conclude that
\begin{eqnarray*}
E_4(v(t),\partial_tv(t),t)&\geq& \frac{1}{2}\displaystyle\int_{\R}(\partial_x v(t))^2  {\mathrm{d}}x+\frac{1}{2}\left(1-\frac{1}{2\varepsilon}\right)\displaystyle\int_{\R}(\partial_tv(t))^2  {\mathrm{d}}x\\
&&+\frac{1}{16}\displaystyle\int_{\R}\frac{V(x)(v(t))^2}{1+t}  {\mathrm{d}}x+\frac{3-4\varepsilon}{16}\displaystyle\int_{\R}\frac{(v(t))^2}{(1+t)^2}  {\mathrm{d}}x.
\end{eqnarray*}
Choosing $\varepsilon=5/8$, we derive
\begin{equation}\label{E17A}
E_4(v(t),\partial_tv(t),t)\geq \frac{1}{32}\displaystyle\int_{\R}\left(
\frac{(v(t))^2}{(t+1)^2}+\frac{V(x)(v(t))^2}{t+1}+
(\partial_tv(t))^2+
(\partial_xv(t))^2 \right) {\mathrm{d}}x,\qquad\hbox{for all}\,\,t\geq \tau_1.
\end{equation}
On the other hand, employing  Young's inequality, we conclude that
\begin{equation}\label{E18A}
E_4(v(t),\partial_tv(t),t)\leq \frac{3}{4}\displaystyle\int_{\R}\left(
\frac{(v(t))^2}{(t+1)^2}+\frac{V(x)(v(t))^2}{t+1}+
(\partial_tv(t))^2+
(\partial_xv(t))^2 \right) {\mathrm{d}}x,\qquad\hbox{for all}\,\,t\geq s_0.
\end{equation}
It follows from \eqref{E17A} and \eqref{E18A} that
\begin{equation} \label{E15A}
{C}^{-1} E_4(v(t),\partial_tv(t),t)\le
\displaystyle\int_{\R}\left(
\frac{(v(t))^2}{(t+1)^2}+\frac{V(x)(v(t))^2}{t+1}+
(\partial_x
v(t))^2+(\partial_tv(t))^2
 \right) {\mathrm{d}}x\le {C} E_4(v(t),\partial_tv(t),t),
\end{equation}
for some $C>0$, for all $t\ge \tau_1$. To conclude, using \eqref{E14A}, and \eqref{E15A},  we deduce that
\begin{eqnarray} \label{E16A}
&&\displaystyle\int_{\R}\left(\frac{(v(t))^2}{(t+1)^2}+\frac{V(x)(v(t))^2}{t+1}+(\partial_t
v(t))^2+
(\partial_xv(t))^2
 \right) {\mathrm{d}}x\notag\\
 &&\le C^2
 \left(\frac{1+s_0}{1+t}\right)\left( \frac{\|g(s_0)\|^2_{L^2}}{(1+s_0)^2}+\frac{\|\sqrt{V}g(s_0)\|^2_{L^2}}{1+s_0}
+	\|h(s_0)\|^2_{L^2}+\|\partial_x g(s_0)\|^2_{L^2}\right),
\end{eqnarray}
for all $t\geq s_0$ when $s_0\ge t_1$, while, if $ s_0\le t_1$, then for all $t\geq t_1$,
\begin{eqnarray} \label{E166A}
&&\displaystyle\int_{\R}\left(\frac{(v(t))^2}{(t+1)^2}+\frac{V(x)(v(t))^2}{t+1}+(\partial_t
v(t))^2+
(\partial_xv(t))^2
 \right) {\mathrm{d}}x\notag\\
 &&\le C^2
 \left(\frac{1+\tau_1}{1+t}\right)\displaystyle\int_{\R}\left(
 \frac{(v(\tau_1))^2}{(\tau_1+1)^2}
+\frac{V(x)(v(\tau_1))^2}{\tau_1+1} + (\partial_t
v(\tau_1))^2+
(\partial_xv(\tau_1))^2 \right) {\mathrm{d}}x\notag\\
 &&\le C^2\tilde{C_1} 
 \left(\frac{1+s_0}{1+t}\right)\left( \frac{\|g(s_0)\|^2_{L^2}}{(1+s_0)^2}+\frac{\|\sqrt{V}g(s_0)\|^2_{L^2}}{1+s_0}
+	\|h(s_0)\|^2_{L^2}+\|\partial_x g(s_0)\|^2_{L^2}\right),
\end{eqnarray}
where we have used Lemma \ref{LEE}, with $\tilde{C_2}=C_0(1+t_1)^\mu=C_0\left(1+\frac{R_0}{\mu_0-1}\right)$. Furthermore, when $s_0\leq t_1$, by exploiting Lemma \ref{LEE}, we deduce that for all $ t \in[s_0, t_1]$,
\begin{eqnarray}\label{E16D}
&&\frac{\|v(t)\|^2_{L^2}}{(1+t)^{2}}+\frac{\|\sqrt{V}v(t)\|^2_{L^2}}{1+t}
+	\|\partial_tv(t)\|^2_{L^2}+\|\partial_x v(t)\|^2_{L^2}\notag\\
&&\leq \tilde{C_2} \left(\frac{1+s_0}{1+t}\right)
\left( 	\frac{\|g(s_0)\|^2_{L^2}}{(1+s_0)^2}+\frac{\|\sqrt{V}g(s_0)\|^2_{L^2}}{1+s_0}
+	\|h(s_0)\|^2_{L^2}+\|\partial_x g(s_0)\|^2_{L^2}\right).
 \end{eqnarray}
The proof of Lemma \ref{LEA} is completed by combining \eqref{E16A}, \eqref{E166A}, and \eqref{E16D} .
\end{proof}	

\subsection{Decay estimates of the $H^1\times L^2$ norm of $(v,\partial_tv)$ for $H^1\times L^2$ data}\label{subs2.3}
Based on the  Lemmas proved in the previous subsections, 
we now proceed to prove the  main result of this section, namely  Proposition
\ref{prop2.1}. To this end, we begin by introducing the operator $R(t,s_0)$ that maps the initial data $(v(s_0),\partial_tv(s_0))\in
H^{2}(\mathbb{R})\times H^{1}(\mathbb{R})$, given at time $s_0\geq0$, to the solution $v(t)\in
H^{2}(\mathbb{R})$ at time $t \geq s_0$. That is, the solution $v$ of \eqref{1}
is expressed as $v(t)=R(t,s_0)(g(s_0),h(s_0))$.  We also define the operator $S(t,s_0)k:=R(t,s_0)(0,k)$ for any function $k\in H^{1}(\mathbb{R})$. Moreover, since $R(t,s_0)$ depends only on the difference $t-s_0$, we write $R(t-s_0)$ instead of $R(t,s_0)$, particularly $R(t)\equiv R(t,0)$. Finally, if $(g(s_0),h(s_0))\in H^{1}(\mathbb{R})\times L^{2}(\mathbb{R})$, the function $t\mapsto R(t,s_0)(g(s_0),h(s_0))$, defined in the sense of \eqref{Extend.Operator1} below, is called a {\it generalized
solution} of the initial value problem \eqref{1}, also referred to as a {\it mild} solution.

Let  $\mu \in (0,\mu_0)$ and let 
\begin{equation}\label{alpha}
\alpha:=\alpha(\mu,\mu_0)=
\left\{
\begin{array}{ll}
 \mu\  &\textrm{  if}\ \  \ \mu_0\in(0,1],\\\\
 1 &\textrm{  if}\ \  \ \mu_0 >1.
\end{array}
\right.
\end{equation} 
\begin{proposition}\label{prop2.1}
 Assume $(g(s_0),h(s_0))\in H^{1}(\mathbb{R})\times L^{2}(\mathbb{R})$ that are compactly supported 
on $(-R_0-s_0,R_0+s_0)$, with $R_0>0$, then there exist a constant $C=C(R_0,\alpha,\alpha_0)>0$ and a unique {\bf mild} solution $v\in  \mathcal{C}\left([s_0,\infty),H^1(\mathbb{R})\right)\cap \mathcal{C}^{1}\left([s_0,\infty),L^2(\mathbb{R})\right)$ of \eqref{1} satisfying
\begin{eqnarray}\label{0A1}
&&\frac{\|v(t)\|^2_{L^2}}{(1+t)^{2}}+\frac{\|\sqrt{V}v(t)\|^2_{L^2}}{1+t}
+	\|\partial_tv(t)\|^2_{L^2}+\|\partial_x v(t)\|^2_{L^2}\notag\\
&&\leq C \left(\frac{1+s_0}{1+t}\right)^{\alpha}
\left( \frac{\|v(s_0)\|^2_{L^2}}{(1+s_0)^2}+\frac{\|\sqrt{V}v(s_0)\|^2_{L^2}}{1+s_0}
+	\|\partial_tv(s_0)\|^2_{L^2}+\|\partial_x v(s_0)\|^2_{L^2}\right),
 \end{eqnarray}
for  all   $t\ge s_0$, where $\alpha$ is defined in \eqref{alpha}.  In addition,  if $(g(s_0),h(s_0))\in H^{2}(\mathbb{R})\times H^{1}(\mathbb{R})$, then $v$ is a  {\bf strong} solution of \eqref{1} and satisfies
\begin{eqnarray}\label{5mai1}
&&\frac{\|\sqrt{V}\partial_tv(t)\|^2_{L^2}}{1+t}+\|\partial^2_t v(t)\|^2_{L^2}+\|\partial^2_{xt}v(t)\|^2_{L^2}+\|\partial_x^2  v(t)\|^2_{L^2}\notag\\
&&\leq C
 \left(\frac{1+s_0}{1+t}\right)^{\alpha}
\left( \frac{\|\partial_t v(s_0)\|^2_{L^2}}{
	(1+s_0)^2}+\frac{\|\sqrt{V}\partial_tv(s_0)\|^2_{L^2}}{1+s_0}
+	\|\partial^2_tv(s_0)\|^2_{L^2}+\|\partial^2_{xt}v(s_0)\|^2_{L^2}\right),
 \end{eqnarray}
for  all  
  $t\ge s_0$.
\end{proposition}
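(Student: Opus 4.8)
\emph{Plan.} I would organise the argument by the regularity of the data, exactly as the proposition is stated: first establish \eqref{0A1} for $H^2\times H^1$ data, then extend it to $H^1\times L^2$ data by density, and finally bootstrap to the second-order estimate \eqref{5mai1} by differentiating the equation in time. For data in $H^2\times H^1$, Theorem \ref{existencetheorem} (with $m=0$) already furnishes a unique strong solution $v$ with $\supp v(t)\subset\{|x|\le R_0+t\}$, so nothing has to be proven about existence. The estimate \eqref{0A1} is then nothing but the content of the two decay lemmas: for $\mu_0\in(0,1]$ it is Lemma \ref{LE}, whose rate $(\tfrac{1+s_0}{1+t})^{\mu}$ matches $\alpha=\mu$, and for $\mu_0>1$ it is Lemma \ref{LEA}, whose rate $\tfrac{1+s_0}{1+t}$ matches $\alpha=1$. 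The definition \eqref{alpha} of $\alpha$ is precisely the device that merges the two cases into a single inequality, so this step requires no new computation.

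The extension to $H^1\times L^2$ data is a density argument built on \eqref{0A1}. The solution map $(g(s_0),h(s_0))\mapsto R(t,s_0)(g(s_0),h(s_0))$ is linear, and at the fixed time $s_0$ the right-hand side of \eqref{0A1} is equivalent, up to the $s_0$-dependent constants, to the squared $H^1\times L^2$ norm of the data; thus \eqref{0A1} says that $R(t,s_0)$ is bounded, uniformly in $t$, from $H^1\times L^2$ into $\mathcal{C}([s_0,\infty),H^1)\cap\mathcal{C}^1([s_0,\infty),L^2)$. I would approximate compactly supported $(g,h)\in H^1\times L^2$ by $(g_n,h_n)\in H^2\times H^1$ with supports in a fixed interval, apply \eqref{0A1} to the differences $v_n-v_m$ to see that $(v_n)$ is Cauchy in the energy space, and define the mild solution as the limit (this is the extended operator \eqref{Extend.Operator1}); passing \eqref{0A1} to the limit preserves it. Uniqueness is immediate, since the difference of two mild solutions with identical data is a mild solution with zero data, and \eqref{0A1} forces it to vanish.

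For the second-order estimate \eqref{5mai1}, the key observation is that, because $V=V(x)$ is independent of $t$, the function $w:=\partial_t v$ solves the \emph{same} homogeneous problem \eqref{1}, now with data $(w(s_0),\partial_t w(s_0))=(h(s_0),\,g''(s_0)-V h(s_0))\in H^1\times L^2$, the second component being read off from the equation at $t=s_0$; moreover $\supp w(t)\subset\{|x|\le R_0+t\}$. Applying the estimate \eqref{0A1} (already proven above) to $w$ reproduces the first three terms of \eqref{5mai1}, namely $\tfrac{\|\sqrt V\partial_t v\|^2}{1+t}$, $\|\partial_t^2 v\|^2=\|\partial_t w\|^2$ and $\|\partial_{xt}^2 v\|^2=\|\partial_x w\|^2$, with exactly the right-hand side displayed in \eqref{5mai1}. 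The last term must then be recovered from the equation,
\begin{equation*}
\partial_x^2 v=\partial_t^2 v+V\partial_t v .
\end{equation*}

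The main obstacle is to control $\|\partial_x^2 v\|^2$ at the sharp rate $(1+t)^{-\alpha}$. The naive bound $\|V\partial_t v\|^2\le\mu_0\|\sqrt V\partial_t v\|^2$ does not suffice, since \eqref{0A1} for $w$ only controls the \emph{weighted} quantity $\tfrac{\|\sqrt V\partial_t v\|^2}{1+t}$, and recovering the unweighted norm costs a factor $1+t$, which would spoil the rate. To close this at the optimal rate I would instead run a dedicated energy estimate for $z:=\partial_x v$, whose natural energy $\tfrac12\int_{\R}((\partial_x z)^2+(\partial_t z)^2)\,dx$ already contains $\|\partial_x^2 v\|^2$; here $z$ solves \eqref{1} with the additional source $-V'\partial_t v$, and the crucial structural fact $|V'(x)|\le V(x)$ (which holds for $V=\mu_0(1+x^2)^{-1/2}$) lets this source be absorbed by the same Lyapunov functionals used in Lemmas \ref{LE}--\ref{LEA}, exploiting the decay of $\|\sqrt V\partial_t v\|$ and $\|\partial_{xt}^2 v\|$ already obtained for $w$. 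Making the source term integrable in time with the correct weight is the step I expect to require the most care.
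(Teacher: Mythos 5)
Your plan coincides with the paper's proof in every step except the last one. The paper also gets \eqref{0A1} for $H^{2}\times H^{1}$ data directly from Lemmas \ref{LE} and \ref{LEA}, extends it to $H^{1}\times L^{2}$ data by exactly the density/Cauchy-sequence argument you describe (this is how the extended operator \eqref{Extend.Operator1} is defined, and uniqueness comes with it), and proves \eqref{5mai1} by applying \eqref{0A1} to $w=\partial_t v$, which solves \eqref{ut}. For the remaining term the paper simply invokes $\partial_x^2 v=\partial_t^2 v+V\partial_t v$ and declares \eqref{5mai1} proved. Your diagnosis of this step is correct and sharper than the paper's exposition: from the $w$-estimate alone one controls only $\|\sqrt V\partial_t v\|_{L^2}^2/(1+t)$, so the bound $\|V\partial_t v\|_{L^2}^2\le\mu_0\|\sqrt V\partial_t v\|_{L^2}^2$ costs a factor $(1+t)$, and no functional inequality among the four quantities controlled for $w$ can repair this: a fixed bump spread out to width of order $t$ (and supported in $|x|\le R_0+t$) makes $\|w\|_{L^2}^2/(1+t)^2$, $\|\sqrt V w\|_{L^2}^2/(1+t)$ and $\|\partial_x w\|_{L^2}^2$ all small while $\|Vw\|_{L^2}$ stays of order one.

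However, your substitute argument is a genuine gap, not merely a delicate step. Differentiating in $x$ gives the source $-V'\partial_t v$ for $z=\partial_x v$, and after absorbing it into the damping via $|V'|\le V/2$ one is left with the term $\|\sqrt V\partial_t v(s)\|_{L^2}^2$ in the energy inequality. The only available bounds for it are $C(1+s)^{1-\alpha}$ (from the $w$-estimate) or, via the dissipation identity $\int_{s_0}^{\infty}\|\sqrt V\partial_t v(s)\|_{L^2}^2\,ds\le E_0(v(s_0),\partial_t v(s_0))$, a bound on its time integral; the first yields a growing right-hand side and the second yields only boundedness of $\|\partial_x^2 v(t)\|_{L^2}$, never the rate $(1+t)^{-\alpha}$. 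Moreover the dissipation identity reintroduces the first-order data norms of $v(s_0)$ that are absent from the right-hand side of \eqref{5mai1}, so nothing is gained. The simple repair — presumably what the authors intend, though they do not say so — is this: since $V\le\mu_0$ pointwise, $\|V\partial_t v(t)\|_{L^2}\le\mu_0\|\partial_t v(t)\|_{L^2}$, and $\|\partial_t v(t)\|_{L^2}^2$ decays at the rate $(1+t)^{-\alpha}$ by \eqref{0A1} applied to $v$ itself. This proves \eqref{5mai1} with the right-hand side enlarged by the first-order data norms $\|v(s_0)\|_{L^2}^2/(1+s_0)^2+\|\sqrt V v(s_0)\|_{L^2}^2/(1+s_0)+\|\partial_x v(s_0)\|_{L^2}^2$, an enlargement that is harmless in every application in Section \ref{sec3}: the Duhamel pieces there have data $(0,f(\partial_t u(s)))$, for which these extra norms vanish, and for the linear part they are bounded by the size of the initial data anyway. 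In short, you located the one real soft spot of the paper's proof, but the repair you propose does not close, while a one-line use of the already-established $v$-estimate does, at the price of a slightly weaker but fully sufficient statement.
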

\begin{proof}
{\bf Existence and Uniqueness.} Let $T_0>s_0$ be arbitrary, and suppose that $(g(s_0),h(s_0)\in H^{1}(\mathbb{R})\times L^{2}(\mathbb{R})$. By a density argument, there exists a sequence $\{(g_{s_0}^{(j)},h_{s_0}^{(j)})\}_{j=1}^\infty\subseteq
H^2(\mathbb{R})\times H^1(\mathbb{R})$, such that 
$$\supp(g_{s_0}^{(j)}),\  \supp(h_{s_0}^{(j)})\subseteq (-1-R_0-s_0,1+R_0+s_0),\qquad\hbox{for all}\,\,j\in\mathbb{N},$$ 
and
$$\lim_{j\rightarrow\infty}(g_{s_0}^{(j)},h_{s_0}^{(j)})=(g(s_0),h(s_0))\;\;\mbox{in}\;\; H^1(\mathbb{R})\times L^2(\mathbb{R}).$$
Using Theorem \ref{existencetheorem}, let $v^{(j)}$ denote the strong solution of the linear homogeneous equation \eqref{1} with initial data $(g_{s_0}^{(j)},h_{s_0}^{(j)})$. Then, the difference $v^{(j)}-v^{(k)}$ is a strong solution of the Cauchy problem 
$$
\begin{cases}
\displaystyle \partial_{t}^2v-\partial_x^2 v +V(x) \partial_{t}v =0, &x\in \mathbb{R},\,t>s_0,\\\\
v(x,s_0)= g_{s_0}^{(j)}(x)-g_{s_0}^{(k)}(x),\, \partial_{t}v(x,s_0)=  h_{s_0}^{(j)}(x)-h_{s_0}^{(k)}(x),&x\in \mathbb{R}.\\
\end{cases}
$$
By applying Lemmas \ref{LE},  and \ref{LEA},  to $v^{(j)}-v^{(k)}$, we obtain, in particular, the following estimate
\begin{eqnarray*}
&&\|v^{(j)}-v^{(k)}\|^2_{L^2}+\|\partial_t(v^{(j)}-v^{(k)})\|^2_{L^2}+\|\partial_x (v^{(j)}-v^{(k)})\|^2_{L^2}\\
&&\leq C (1+T_0)^{2}\left( \|g_{s_0}^{(j)}-g_{s_0}^{(k)}\|^2_{L^2}+\|h_{s_0}^{(j)}-h_{s_0}^{(k)}\|^2_{L^2}+\|\partial_x (g_{s_0}^{(j)}-g_{s_0}^{(k)})\|^2_{L^2}\right),
\end{eqnarray*}
for  all  $t\in [s_0,T_0]$.  This shows that the sequence $\left\{v^{(j)}\right\}_{j=1}^\infty$ is Cauchy in the complete space $\mathcal{C}([s_0,T_0];H^1(\mathbb{R}))\cap \mathcal{C}^1([s_0,T_0];L^2(\mathbb{R}))$. Therefore, we can define its limit 
\begin{equation}\label{eq2.9}
\lim_{j\rightarrow\infty}v^{(j)}=v\in \mathcal{C}([s_0,\infty);H^1(\mathbb{R}))\cap \mathcal{C}^1([s_0,\infty);L^2(\mathbb{R})),
\end{equation}
since $T_0>s_0$ is arbitrary. As $v^{(j)}$ satisfies $v^{(j)}(t,x)=R(t,s_0)(g_{s_0}^{(j)},h_{s_0}^{(j)})$, it follows that
$v(t)=\lim\limits_{j\rightarrow\infty} R(t,s_0)(g_{s_0}^{(j)},h_{s_0}^{(j)})$. This shows that the operator $R(t,s_0)$ can be uniquely extended to a new operator
	\begin{align}\label{Extend.Operator1}
	\widetilde{R}(t,s_0)&:\,H^1(\mathbb{R})\times L^2(\mathbb{R})\,\longrightarrow\, X_{s_0}\\
	&\qquad(g(s_0),h(s_0))\,\,\longmapsto v(t)\notag
	\end{align}
	also denoted by $R(t,s_0)$, where
	$X_{s_0}:=\mathcal{C}\left([s_0,\infty),H^1(\mathbb{R})\right)\cap
	\mathcal{C}^1\left([s_0,\infty),L^2(\mathbb{R})\right)$. Therefore, $v(t)=R(t,s_0)(g(s_0),h(s_0))$ is the unique mild solution of the problem \eqref{1}.\\ 
{\bf Energy estimate \eqref{0A1}.} By Lemmas \ref{LE}, and \ref{LEA}, each strong solution $v^{(j)}$ constructed above satisfies the estimates \eqref{A1}, \eqref{A1A}, with $R_0$ replaced by $R_0+1$. By letting $j\rightarrow\infty$ and using the convergence in (\ref{eq2.9}), the same estimates carry over to the mild solution $v$. Hence, the energy estimate \eqref{0A1} holds.\\
{\bf Energy estimate \eqref{5mai1}.} A straightforward computation shows that $w=\partial_t v$ satisfies
\begin{equation}\label{ut}
\begin{cases}
\partial_{t}^2w-\partial_x^2 w +V(x) \partial_{t}w=0, &x\in \mathbb{R},\,t>s_0,\\
w(x,s_0)=  h(x,s_0),\,\partial_{t}w(x,s_0)=  \partial_x^2 g(x,s_0)-V(x)h(x,s_0),&x\in \mathbb{R}.\\
\end{cases}
\end{equation}
Since $(g(s_0),h(s_0))\in H^{2}(\mathbb{R})\times H^{1}(\mathbb{R})$, it follows that $(w(s_0),\partial_tw(s_0))\in H^{1}(\mathbb{R})\times L^{2}(\mathbb{R})$. Then, applying estimate \eqref{0A1} to the mild solution $w$, we obtain for  all $t\ge s_0$,
\begin{eqnarray*}
&&\frac{\|w(t)\|^2_{L^2}}{(1+t)^{2}}+\frac{\|\sqrt{V}w(t)\|^2_{L^2}}{1+t}
+	\|\partial_tw(t)\|^2_{L^2}+\|\partial_x w(t)\|^2_{L^2}\\
&&\leq C \left(\frac{1+s_0}{1+t}\right)^{\alpha}
\left( 
	\frac{\|w(s_0)\|^2_{L^2}}{(1+s_0)^2}+\frac{\|\sqrt{V}w(s_0)\|^2_{L^2}}{1+s_0}
+	\|\partial_tw(s_0)\|^2_{L^2}+\|\partial_x w(s_0)\|^2_{L^2}\right).
 \end{eqnarray*}
This, combined with the relation $\partial_x^2 v=\partial_{t}^2v +V(x) \partial_{t}v$, leads to estimate \eqref{5mai1}. This completes the proof of Proposition \ref{prop2.1}.
\end{proof}
\begin{remark}\label{rmk2.2}
It is worth noting that Proposition \ref{prop2.1} can be extended to higher dimensions ($n\geq 2$). However, this extension does not allow us to generalize the global existence result of Theorem \ref{globalexistence1}, due to the requirement of the Sobolev embedding $H^1(\R^n)\hookrightarrow L^\infty(\R^n)$, which holds only for $n=1$. For this reason, our analysis in this paper is restricted to the one-dimensional case, specifically problem to \eqref{NLW}.
\end{remark}

%%%%%%%%%%%%%%%%%%%%%% Section 3 %%%%%%%%%%%%%%%%%%%%%%%%%%%%%

\section{Proof of Theorems  \ref{globalexistence1}}\label{sec3}

This section  is devoted to proving Theorem \ref{globalexistence1}. To manage the nonlinear term, we begin by recalling some useful elementary inequalities and lemmas. \begin{lemma}\label{basic}
Let $p\geq1, a,b\in  \R$. Then 
\begin{eqnarray}
||a|^p-|b|^p|&\leq &
C (|a|^{p-1}+|b|^{p-1}) |a-b|, \label{ab}\\
||a|^{p-1}a-|b|^{p-1}b|&\leq &
C (|a|^{p-1}+|b|^{p-1})|a-b|. \label{abb}
\end{eqnarray}
\end{lemma}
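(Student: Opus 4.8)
The plan is to prove both inequalities \eqref{ab} and \eqref{abb} by a single mean-value argument applied to two auxiliary scalar functions, with the borderline exponent $p=1$ handled separately. First I would dispose of the case $p=1$: there $|a|^p-|b|^p=|a|-|b|$, so \eqref{ab} is exactly the reverse triangle inequality $||a|-|b||\le|a-b|$, while $|a|^{p-1}a-|b|^{p-1}b=a-b$ makes \eqref{abb} an identity; since $|a|^{p-1}+|b|^{p-1}=2$ in this case, any $C\ge1$ suffices.

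For $p>1$ I would introduce $g(x):=|x|^p$ and $h(x):=|x|^{p-1}x$ on $\R$. Both belong to $\mathcal{C}^1(\R)$, with $g'(x)=p|x|^{p-1}\operatorname{sgn}(x)$ and $h'(x)=p|x|^{p-1}$, where the derivatives extend continuously to $x=0$ precisely because $p-1>0$. The point is that \eqref{ab} concerns $g(a)-g(b)$ and \eqref{abb} concerns $h(a)-h(b)$, and that $|g'(x)|=|h'(x)|=p|x|^{p-1}$ for every $x$. Applying the mean value theorem to $g$ and to $h$ separately (or, equivalently, writing $g(a)-g(b)=(a-b)\int_0^1 g'(b+\theta(a-b))\,d\theta$ and similarly for $h$), I obtain points $\xi_1,\xi_2$ lying between $a$ and $b$ such that $|g(a)-g(b)|=p|\xi_1|^{p-1}|a-b|$ and $|h(a)-h(b)|=p|\xi_2|^{p-1}|a-b|$.

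It then remains only to bound $|\xi_i|^{p-1}$. Since each $\xi_i$ lies between $a$ and $b$, we have $|\xi_i|\le\max(|a|,|b|)$, hence $|\xi_i|^{p-1}\le\max(|a|^{p-1},|b|^{p-1})\le|a|^{p-1}+|b|^{p-1}$, which yields both inequalities with the explicit constant $C=p$. I would favour this $\max$-bound over the alternative route through the subadditivity estimate $(|a|+|b|)^{p-1}\le C(|a|^{p-1}+|b|^{p-1})$, since the latter requires a separate convexity argument splitting into the subcases $1<p\le2$ and $p>2$.

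I expect no genuine obstacle: the statement is elementary and the core of the proof is a one-line derivative estimate. The only points needing a little care are the failure of differentiability of $x\mapsto|x|^p$ at the origin — harmless for $p>1$, but exactly the reason the case $p=1$ must be treated via the triangle inequality — and ensuring the constant $C$ is independent of $a$ and $b$, which the $\max$-bound guarantees with $C=p$.
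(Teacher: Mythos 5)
Your proof is correct. The paper states Lemma \ref{basic} without proof, recalling it as a standard elementary fact, so there is no paper argument to compare against; your mean-value-theorem approach (with $p=1$ dispatched via the reverse triangle inequality, and the bound $|\xi|^{p-1}\le\max(|a|,|b|)^{p-1}\le|a|^{p-1}+|b|^{p-1}$ for $p>1$) is the standard route and even produces the explicit constant $C=p$.
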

Then, we are in the position to  
state and prove  the  following nonlinear estimates.
\begin{lemma}\label{handle}
Let $p\geq2$ and let $F,G:\mathbb{R}\rightarrow\mathbb{R}$ be defined by
$$F(u(x,t))=|u(x,t)|^p\quad\hbox{and}\quad G(u(x,t))=|u(x,t)|^{p-1}u(x,t),$$ 
for all functions $u:\mathbb{R}\times\mathbb{R}_+\rightarrow\mathbb{R}$. Then, for all  $u,v\in \mathcal{C}^1\left([0,T],H^1(\mathbb{R})\right)$, we have
\begin{equation}\label{21A}
|F(u(x,t))-F(v(x,t))|\leq
C(|u(x,t)|^{p-1}+|v(x,t)|^{p-1}) |u(x,t)-v(x,t)|,
\end{equation}
\begin{equation}\label{21AG}
|G(u(x,t))-G(v(x,t))|\leq
C(|u(x,t)|^{p-1}+|v(x,t)|^{p-1}) |u(x,t)-v(x,t)|,
\end{equation}
\begin{eqnarray}\label{21B}
&{}&\big|\partial_x \big[F(u(x,t))\big]-\partial_x \big[F(v(x,t))\big]\big|\\
&{}&\leq C \ |u(x,t)|^{p-1}|\partial_x (u(x,t)-v(x,t))|+C |\partial_x v(x,t) |
(|u(x,t)|^{p-2}+|v(x,t)|^{p-2}) |u(x,t)-v(x,t)|,\nonumber
\end{eqnarray}
\begin{eqnarray}\label{21BG}
&{}&\big|\partial_x \big[G(u(x,t))\big]-\partial_x \big[G(v(x,t))\big]\big|\\
&{}&\leq C \ |u(x,t)|^{p-1}|\partial_x (u(x,t)-v(x,t))|+C |\partial_x v(x,t) |
(|u(x,t)|^{p-2}+|v(x,t)|^{p-2}) |u(x,t)-v(x,t)|,\nonumber
\end{eqnarray}
\begin{eqnarray}\label{21C}
&{}&\big|\partial_t \big[F(u(x,t))\big]-\partial_t  \big[F(v(x,t))\big]\big|\\
&{}&\leq C \ |u(x,t)|^{p-1}|\partial_t  (u(x,t)-v(x,t))|+C |\partial_t  v(x,t) |
(|u(x,t)|^{p-2}+|v(x,t)|^{p-2}) |u(x,t)-v(x,t)|,\nonumber
\end{eqnarray}
and 
\begin{eqnarray}\label{21BC}
&{}&\big|\partial_t  \big[G(u(x,t))\big]-\partial_t  \big[G(v(x,t))\big]\big|\\
&{}&\leq C \ |u(x,t)|^{p-1}|\partial_t  (u(x,t)-v(x,t))|+C |\partial_t  v(x,t) |
(|u(x,t)|^{p-2}+|v(x,t)|^{p-2}) |u(x,t)-v(x,t)|,\nonumber
\end{eqnarray}
 for all $t\in[0,T]$ and almost every \(x\in\mathbb{R}\).
\end{lemma}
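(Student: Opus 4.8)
The plan is to treat the six estimates in two groups. The zeroth-order bounds \eqref{21A} and \eqref{21AG} are nothing more than the elementary inequalities of Lemma \ref{basic} applied pointwise: since $F(s)=|s|^p$ and $G(s)=|s|^{p-1}s$, inequality \eqref{ab} with $a=u(x,t)$, $b=v(x,t)$ gives \eqref{21A} at once, and \eqref{abb} gives \eqref{21AG}. No regularity beyond the pointwise evaluation of $u,v$ is needed here, so I would dispose of these first.

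For the four first-order estimates I would exploit that, because $p\ge 2$, both $F$ and $G$ are $\mathcal{C}^1(\R)$ with
$$
F'(s)=p\,|s|^{p-2}s,\qquad G'(s)=p\,|s|^{p-1},
$$
so the chain rule yields $\partial_x[F(u)]=p\,|u|^{p-2}u\,\partial_x u$ and $\partial_x[G(u)]=p\,|u|^{p-1}\partial_x u$ (and similarly for $\partial_t$). The decisive step is then a telescoping decomposition of the difference: for the $F$-case I would write
$$
|u|^{p-2}u\,\partial_x u-|v|^{p-2}v\,\partial_x v
= |u|^{p-2}u\,\partial_x(u-v)+\bigl(|u|^{p-2}u-|v|^{p-2}v\bigr)\partial_x v,
$$
bound the first term by $|u|^{p-1}|\partial_x(u-v)|$, and apply inequality \eqref{abb} \emph{with exponent $p-1$} (legitimate since $p-1\ge 1$) to the coefficient difference, obtaining $\bigl||u|^{p-2}u-|v|^{p-2}v\bigr|\le C(|u|^{p-2}+|v|^{p-2})|u-v|$. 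Collecting the two pieces reproduces exactly \eqref{21B}. The $G$-case \eqref{21BG} is identical, except that one splits $|u|^{p-1}\partial_x u-|v|^{p-1}\partial_x v$ and uses \eqref{ab} with exponent $p-1$ to estimate $\bigl||u|^{p-1}-|v|^{p-1}\bigr|$. Since these manipulations never use the particular direction of differentiation, the temporal estimates \eqref{21C} and \eqref{21BC} follow verbatim by replacing $\partial_x$ with $\partial_t$.

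The only genuinely delicate point, and the step I expect to require the most care, is the rigorous justification of the chain-rule formulas for the composed functions $F(u),G(u)$ within the Sobolev framework, i.e. that these compositions are weakly differentiable in $x$ (and $t$) with the stated derivatives. This is where the hypotheses are used essentially: because we work in one space dimension, the embedding $H^1(\R)\hookrightarrow L^\infty(\R)$ guarantees that $u(\cdot,t),v(\cdot,t)$ are bounded and continuous, while $u,v\in\mathcal{C}^1([0,T],H^1(\R))$ provides the needed control on $\partial_x u,\partial_t u$; combined with $F,G\in\mathcal{C}^1$ for $p\ge 2$ (so that $F',G'$ are continuous and the superposition operator behaves well on $H^1$), the standard chain rule for Sobolev functions applies and all the pointwise products above are finite for a.e. $x$ and every $t\in[0,T]$. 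Once this is in place, the remaining work is purely algebraic and reduces to the two applications of Lemma \ref{basic} described above.
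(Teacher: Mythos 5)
Your proposal is correct and follows essentially the same route as the paper: the zeroth-order bounds come straight from Lemma \ref{basic}, the first-order bounds use the identical telescoping decomposition $|u|^{p-2}u\,\partial_x(u-v)+(|u|^{p-2}u-|v|^{p-2}v)\partial_x v$ together with \eqref{abb} (resp. \eqref{ab}) at exponent $p-1$, and the time derivatives are handled by the same substitution $\partial_x\mapsto\partial_t$. The paper justifies the Sobolev chain rule by an explicit density argument (smooth approximations $u_n\to u$ in $H^1$ plus passage to the limit), which is exactly the "delicate point" you identified and resolved by the same combination of $H^1(\R)\hookrightarrow L^\infty(\R)$ and $F,G\in\mathcal{C}^1$.
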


\begin{proof}
It is worth noting that inequalities \eqref{21A} and \eqref{21AG} follow directly from  \eqref{ab} and \eqref{abb}, respectively. To estimate \eqref{21B}, observe that for any $w\in \mathcal{C}^1(\mathbb{R})$,
$$\partial_xF(w) = p |w|^{p-1}\text{sgn}(w) \partial_x w= p |w|^{p-2}w \partial_x w.$$
A straightforward calculation shows that, whenever  $u(\cdotp,t), v(\cdotp,t) \in \mathcal{C}^1(\mathbb{R})$, for all $x\in\mathbb{R}$ and $t\in[0,T]$, 
\begin{eqnarray}\label{20A}
\partial_xF(u(x,t))-\partial_xF(v(x,t))&=&p\Big(|u(x,t)|^{p-2}u(x,t)\partial_xu(x,t)-|v(x,t)|^{p-2}v(x,t)\partial_xv(x,t)\Big)\nonumber\\
&=&p|u(x,t)|^{p-2}u(x,t)(\partial_xu(x,t)-\partial_xv(x,t))\nonumber\\
&{}&\,+\,p\,
\partial_xv(x,t) (|u(x,t)|^{p-2}u(x,t)-|v(x,t)|^{p-2}v(x,t)).
\end{eqnarray}
 In fact, when $u,v\in \mathcal{C}^1\left([0,T],H^1(\mathbb{R})\right)$, by approximation, let $\{u_n(t)\},\{v_n(t)\}  \subset \mathcal{C}^\infty_c(\mathbb{R})$ denote the sequences such that, for all $t\in[0,T]$,
$$u_n(t) \to u(t) \qquad\hbox{and}\qquad v_n(t) \to v(t) \quad \text{in } H^1(\mathbb{R}).$$
Since $F\in C^1(\mathbb{R})$ has a bounded derivative and
\[
F'(u_n) = p |u_n|^{p-1}\text{sgn}(u_n)=p |u_n|^{p-2}u_n,\qquad F'(v_n) = p |v_n|^{p-1}\text{sgn}(v_n)=p |v_n|^{p-2}v_n, \quad\hbox{for all}\,\,n,
\]
we may apply the \textbf{Sobolev Chain Rule:} If $w \in H^1(\mathbb{R})$, $F \in \mathcal{C}^1(\mathbb{R})$ has a bounded derivative, then $F\circ w\in H^1(\mathbb{R})$ and
\[
\partial_xF(w) = F'(w) \partial_x w = p |w|^{p-2}w \partial_x w,
\]
in the distributional sense. Then, passing to the limit:
\[
\partial_{x}(F(u)) = \lim_{n \to \infty} \partial_{x}(F(u_n)) = \lim_{n \to \infty} F'(u_n) \, \partial_{x} u_n,\qquad \partial_{x}(F(v)) = \lim_{n \to \infty} \partial_{x}(F(v_n)) = \lim_{n \to \infty} F'(v_n) \, \partial_{x} v_n,
\]
and using dominated convergence or strong convergence in $L^2$, we get:
\[
\partial_{x}(F(u)) = p |u|^{p-1} \, \partial_{x} u\quad\hbox{and}\quad \partial_{x}(F(v)) = p |v|^{p-1} \, \partial_{x} v.
\]
Consequently, we derive from \eqref{20A} that
\begin{eqnarray}\label{21d}
\big|\partial_x \big[F(u)\big]-\partial_x \big[F(v)\big]\big|
&\leq& C \ |u|^{p-1}|\partial_x u-\partial_x v|+
C |\partial_x v |\ 
\big|  |u|^{p-2}u-|v|^{p-2}v\big|,
\end{eqnarray}
for all $t\in[0,T]$ and almost everywhere in \( \mathbb{R} \). Taking into account inequality 
\eqref{abb}, the condition $p\geq2$, and relation \eqref{21d}, we obtain estimate \eqref{21B}. Following the same reasoning, we derive \eqref{21BG} by using $G'(w) = p |w|^{p-1}$, for all $w\in \mathcal{C}^1(\mathbb{R})$. Likewise, estimates \eqref{21C}-\eqref{21BC} follow analogously by using the fact that $u\in \mathcal{C}^1\left([0,T],H^1(\mathbb{R})\right)$ implies that for almost every \(x\in \mathbb{R}\), the function \(t\mapsto u(t,x)\) is continuously differentiable. This completes the proof of Lemma \ref{handle}.
\end{proof}

 \noindent{\bf Proof of  Theorem \ref{globalexistence1}.} Assume $u_0 \in H^2(\R)$ and $u_1 \in H^1(\R)$, and let $p>1+\frac{2}{\alpha_0}$. Fix a small parameter $0<\varepsilon_0\ll1$ such that  
$\|u_0\|_{H^2}+\|u_1\|_{H^1}\leq\varepsilon_0$.
 For $T>0$, define the energy solution space $X(T)=   \mathcal{C}([0,T],H^2(\mathbb{R}))\cap \mathcal{C}^1([0,T],H^1(\mathbb{R}))\cap \mathcal{C}^2([0,T],L^2(\mathbb{R}))$,
equipped with the norm
$$\|v\|_{X(T)}=\sup_{0\leq t\leq T}(1+t)^{\frac{\alpha}{2}}\left\{ (1+t)^{-1}\| v(t)\|_{L^2}+\| \partial_x v(t)\|_{H^1}+\|\partial_t v(t)\|_{H^1}+\|\partial^2_{t} v(t)\|_{L^2}\right\},$$
where $\alpha$ is defined in \eqref{alpha}. We are going to use the Banach fixed-point theorem. Let us define the following complete metric space $B_{M}(T)=\{v\in X(T);\,\,\|v\|_{X(T)}\leq 2 M\},$ endowed with the distance induced by the norm $\|\cdotp\|_{X(T)}$, where $M>0$ is a positive constant to be chosen later. By Proposition \ref{prop2.1} and estimates \eqref{An1}, \eqref{Bn1}, and \eqref{Bn2} below, define the mapping $\Phi:B_M(T)\rightarrow X(T)$ by
$$\Phi(u)(t)=R(t)(u_0,u_1)+\int_0^tS(t,s)f(\partial_tu)\,ds=:\Phi(u)^{lin}(t)+\Phi(u)^{nl}(t),\quad\hbox{for $u\in B_M(T)$}.$$
Our goal is to prove that $\Phi$  is a contraction mapping on $B_M(T)$.\\

{\bf Step 1. $\Phi:B_M(T)\longrightarrow B_M(T)$.} Let $u\in B_M(T)$.\\
\noindent{\it Estimation of $(1+t)^{-1} \|\Phi(u)(t)\|_{L^2}+ \|\partial_t\Phi(u)(t)\|_{L^2}+\|\partial_x\Phi(u)(t)\|_{L^2}$}. Using  \eqref{0A1} in the particular case $s_0=0$, we have
\begin{equation} \label{GG1}
(1+t)^{-1}
\|\Phi(u)^{lin}(t)\|_{L^2}+
\|\partial_x \Phi(u)^{lin}(t)\|_{L^2}+
\|\partial_t \Phi(u)^{lin}(t)\|_{L^2}
	\leq C \,\varepsilon_0
 ({1+t})^{-\frac{\alpha}2}.
\end{equation}
Moreover,  by applying inequality \eqref{0A1} once again with $s_0=s$, and taking the initial data as $(0, f(\partial_tu(s)))$, we obtain
$$(1+t)^{-1}
\| \Phi(u)^{nl}(t)\|_{L^2}+
\|\partial_t \Phi(u)^{nl}(t)\|_{L^2}+\|\partial_x  \Phi(u)^{nl}(t)\|_{L^2}\leq C\,\int_0^t\left(\frac{1+s}{1+t}\right)^{\frac{\alpha}{2}}\|f(\partial_tu(s))\|_{L^2}\,ds.
$$
By the Sobolev embedding $H^1(\mathbb{R})\hookrightarrow L^\infty(\mathbb{R})$,  and the fact that $u\in B_M(T)$, we infer 
\begin{align}\label{S01}
\|\partial_tu(s)\|_{L^{\infty}}\leqslant C \|\partial_tu(s)\|_{H^1}\leqslant  C(1+s)^{-\frac{\alpha}{2}}\|u\|_{X(T)}\le C M(1+s)^{-\frac{\alpha}{2}}.
\end{align}
Therefore, by \eqref{S01} and  exploiting again  the fact that $u\in B_M(T)$, we conclude
\begin{equation}\label{An1}
\|f(\partial_t u(s))\|_{L^2}\leq \|\partial_tu(s)\|_{\infty}^{p-1}\|\partial_tu(s)\|_{L^2}\leq 
C  M^p(1+s)^{-\frac{\alpha p}2}.
\end{equation}
Then, we obtain
$$
(1+t)^{-1}\| \Phi(u)^{nl}(t)\|_{L^2}+
\|\partial_t \Phi(u)^{nl}(t)\|_{L^2}+\|\partial_x  \Phi(u)^{nl}(t)\|_{L^2}\leq C\,M^p(1+t)^{-\frac{\alpha}{2}}\int_0^t(1+s)^{-\frac{(p-1)\alpha}{2}}\,ds.
$$
Since $p>1+\frac{2}{\alpha_0}$, it follows that $\frac{2}{p-1}<\alpha_0$. Hence, if $\mu \in (\frac{2}{p-1},\mu_0)$,   in the case $\mu_0\in (0,1]$,  we easily deduce that
\begin{equation}\label{alpha-alpha0}
 \frac{(p-1)\alpha}{2}>1.
\end{equation}
Therefore, the integral is uniformly bounded, and we get
\begin{equation}\label{G200}
(1+t)^{-1}\| \Phi(u)^{nl}(t)\|_{L^2}+
\|\partial_t \Phi(u)^{nl}(t)\|_{L^2}+\|\partial_x  \Phi(u)^{nl}(t)\|_{L^2}\leq
 C\,M^p(1+t)^{-\frac{\alpha}{2}}.
\end{equation}
Combining \eqref{GG1} and \eqref{G200}, we conclude
\begin{equation}\label{G2T}
(1+t)^{-1}\| \Phi(u)(t)\|_{L^2}+
\|\partial_t \Phi(u)(t)\|_{L^2}+\|\partial_x  \Phi(u)(t)\|_{L^2}\leq  C \,\varepsilon_0
 ({1+t})^{-\frac{\alpha}2}+
 C\,M^p(1+t)^{-\frac{\alpha}{2}}.
\end{equation}

\noindent{\it Estimation of $\|\partial^2_{xt}\Phi(u)(t)\|_{L^2}+\|\partial_t^2 \Phi(u)(t)\|_{L^2}{+\|\partial_x^2 \Phi(u)(t)\|_{L^2}}$.} By \eqref{5mai1} with $s_0=0$, we have
\begin{equation}\label{H6} 
{\|\partial_x^2 \Phi(u)^{lin}(t)\|_{L^2}+}\|\partial^2_{xt} \Phi(u)^{lin}(t)\|_{L^2}+\|\partial_t^2 \Phi(u)^{lin}(t)\|_{L^2}
	\leq C \,\varepsilon_0
 ({1+t})^{-\frac{\alpha}2}.
\end{equation}
Moreover, applying estimate \eqref{5mai1} again with $s_0=s$, and considering the initial data $(0, f(\partial_tu(s)))$, we obtain
\begin{eqnarray}\label{fh}
&{}&\|\partial_x^2 \Phi(u)^{nl}(t)\|_{L^2}+\|\partial^2_{xt}\Phi(u)^{nl}(t)\|_{L^2}+\|\partial_t^2 \Phi(u)^{nl}(t)\|_{L^2}\nonumber\\
&{}&\leq C\,\int_0^t\left(\frac{1+s}{1+t}\right)^{\frac{\alpha}{2}}\left(\| f(\partial_tu(s))\|_{H^1}+\|\partial_t f(\partial_tu(s))\|_{L^2}\right)\,ds.
\end{eqnarray}
Based on \eqref{S01} and exploiting again  the fact that $u\in B_M(T)$, we have
\begin{equation}\label{Bn1}
\|\partial_x f(\partial_tu(s))\|_{L^2}=\|p|\partial_tu|^{p-1}\partial^2_{xt} u\|_{L^2}\leq p\|\partial_tu\|_\infty^{p-1}\|\partial^2_{xt}u\|_{L^2}\leq C  M^p(1+s)^{-\frac{p\alpha}{2}},
\end{equation}
and
\begin{equation}\label{Bn2}
\|\partial_t f(\partial_tu(s))\|_{L^2}=\|p|\partial_tu|^{p-1} \partial^2_tu\|_{L^2}\leq p\|\partial_tu\|_\infty^{p-1}\|\partial^2_tu\|_{L^2}\leq C  M^p(1+s)^{-\frac{p\alpha}{2}}.
\end{equation}
Hence, combining \eqref{An1}, \eqref{Bn1}, \eqref{Bn2}  and the estimate \eqref{fh}, we infer
\begin{equation} 
{\|\partial_x^2 \Phi(u)^{nl}(t)\|_{L^2}+}\|\partial^2_{xt} \Phi(u)^{nl}(t)\|_{L^2}+\|\partial_t^2 \Phi(u)^{nl}(t)\|_{L^2}
\leq C\,M^p(1+t)^{-\frac{\alpha}{2}}\int_0^t(1+s)^{-\frac{(p-1)\alpha}{2}}\,ds.
\end{equation}
It follows from condition \eqref{alpha-alpha0} that
\begin{equation} \label{7mai3}
\|\partial_x^2 \Phi(u)^{nl}(t)\|_{L^2}+\|\partial^2_{xt}\Phi(u)^{nl}(t)\|_{L^2}+\|\partial_t^2 \Phi(u)^{nl}(t)\|_{L^2}
\leq C\,M^p(1+t)^{-\frac{\alpha}{2}}.
\end{equation}
Therefore, by \eqref{H6} and \eqref{7mai3}, we get
\begin{equation} \label{G2000}
\|\partial_x^2 \Phi(u)(t)\|_{L^2}+\|\partial^2_{xt} \Phi(u)(t)\|_{L^2}+\|\partial_t^2 \Phi(u)(t)\|_{L^2}
\leq C\,\varepsilon_0 (1+t)^{-\frac{\alpha}{2}}
+ C\,M^p(1+t)^{-\frac{\alpha}{2}}.
\end{equation}
Summing up the  estimates \eqref{G2T} and \eqref{G2000}, we conclude that $ \|\Phi(u)\|_{X(T)}\leq  C\,\varepsilon_0+C\,M^p$. By choosing $M>0$ such that $C\,M^{p-1}\leq \frac12$, and then taking $0<\varepsilon_0\ll1$ small enough to ensure $C\,\varepsilon_0 \le \frac12M$, we arrive at $ \|\Phi(u)\|_{X(T)}\leq M$, that is, $\Phi(u)\in B_M(T)$.\\

{\bf Step 2. $\Phi$ is a contraction.} Let $u,v\in B_M(T)$.\\
\noindent{\it Estimation of $(1+t)^{-1}\|(\Phi(u)-\Phi(v))(t)\|_{L^2}+
\|\partial_t(\Phi(u)-\Phi(v))(t)\|_{L^2}+\|\partial_x(\Phi(u)-\Phi(v))(t)\|_{L^2}$.} Applying  \eqref{0A1} with $s_0=s$, and initial data as $(0, f(\partial_tu(s))-f(\partial_tv(s)))$, we derive
\begin{multline}
(1+t)^{-1}
\| \Phi(u)(t)-\Phi(v)(t)
\|_{L^2}+
\|\partial_t \Phi(u)(t)-\partial_t \Phi(v)(t)\|_{L^2}+\|\partial_x  \Phi(u)(t)-\partial_x  \Phi(v)(t)\|_{L^2}\\
\leq C\,\int_0^t\left(\frac{1+s}{1+t}\right)^{\frac{\alpha}{2}}\|f(\partial_tu(s))-f(\partial_tv(s))\|_{L^2}\,ds.
\end{multline}
By the basic inequalities \eqref{ab}-\eqref{abb}, we have
$$\|f(\partial_tu(s))-f(\partial_tv(s))\|_{L^2}\leq \|\partial_tu(s)-\partial_tv(s)\|_{L^2}\left(\|\partial_tu(s)\|^{p-1}_{\infty}+\|\partial_tv(s)\|^{p-1}_{\infty}\right).$$
Note that
$$
\|\partial_tu(s)-\partial_tv(s)\|_{L^2}\leq (1+s)^{-\frac{\alpha}{2}}\|u-v\|_{X(T)}.
$$
Using the Sobolev embedding $H^1(\mathbb{R})\hookrightarrow L^\infty(\mathbb{R})$, it follows that
$$\|\partial_tu(s)\|_{\infty}^{p-1}\leq C\|\partial_tu(s)\|_{H^1}^{p-1}\leq C (1+s)^{-\frac{\alpha (p-1)}{2}}\|u\|_{X(T)}^{p-1}
\leq C M^{p-1}(1+s)^{-\frac{\alpha (p-1)}{2}},$$
and similarly for $\partial_t v(s)$. Accordingly,
\begin{equation}\label{Fn1}
\|f(\partial_tu(s))-f(\partial_tv(s))\|_{L^2}\leq CM^{p-1}(1+s)^{-\frac{p\alpha }{2}}\|u-v\|_{X(T)}.
\end{equation}
Hence,
\begin{multline*}
(1+t)^{-1}
\| \Phi(u)(t)-\Phi(v)(t)
\|_{L^2}+
\|\partial_t \Phi(u)(t)-\partial_t \Phi(v)(t)\|_{L^2}+\|\partial_x  \Phi(u)(t)-\partial_x  \Phi(v)(t)\|_{L^2}\\
\leq C\,M^{p-1}\|u-v\|_{X(T)}
(1+t)^{-\frac{\alpha }{2}}\int_0^t(1+s)^{-\frac{(p-1)\alpha }{2}}\,ds.
\end{multline*}
Referring to \eqref{alpha-alpha0}, we deduce
\begin{multline*}
(1+t)^{-1}
\| \Phi(u)(t)-\Phi(v)(t)
\|_{L^2}+
\|\partial_t \Phi(u)(t)-\partial_t \Phi(v)(t)\|_{L^2}+\|\partial_x  \Phi(u)(t)-\partial_x  \Phi(v)(t)\|_{L^2}\\
\leq C\,M^{p-1}\|u-v\|_{X(T)}
(1+t)^{-\frac{\alpha}{2}}.
\end{multline*}
\noindent{\it Estimation of $\|\partial_t^2\left(\Phi(u)-\Phi(v)\right)(t)\|_{L^2}+\|\partial^2_{xt}\left(\Phi(u)-\Phi(v)\right)(t)\|_{L^2}+\|\partial_x^2\left(\Phi(u)-\Phi(v)\right)(t)\|_{L^2}$.} Applying \eqref{5mai1},  we obtain
\begin{eqnarray}
&{}&\|\partial_t^2\left(\Phi(u)-\Phi(v)\right)(t)\|_{L^2}+\|\partial^2_{xt}\left(\Phi(u)-\Phi(v)\right)(t)\|_{L^2}+\|\partial_x^2\left(\Phi(u)-\Phi(v)\right)(t)\|_{L^2}\nonumber\\
&{}&\leq\, C\,\int_0^t\left(\frac{1+s}{1+t}\right)^{\frac{\alpha }{2}}\left(\|f(\partial_tu)-f(\partial_tv)\|_{H^1}+\|\partial_t \left(f(\partial_tu)-f(\partial_tv)\right)\|_{L^2}\right)\,ds.\label{Hn1}
\end{eqnarray}
Since $p>1+\frac{2}{\alpha_0}\geq 3$, it follows that  \eqref{21B}-\eqref{21BG} can be applied. Therefore
\begin{align*}
&\left\|\partial_x \left(f(\partial_tu(s))-f(\partial_tv(s))\right)\right\|_{L^2}\\
&\leqslant C\|\partial^2_{xt} w(s)\|_{L^{2}}\|\partial_tu(s)\|^{p-1}_{L^{\infty}}+C\|\partial_tw(s)\|_{L^{\infty}}\left(\| \partial_tu(s)\|^{p-2}_{L^{\infty}}+\| \partial_tv(s)\|^{p-2}_{L^{\infty}}\right)\|\partial^2_{xt}v(s)\|_{L^{2}},
\end{align*}
where $w(t,x)\doteq u(t,x)-v(t,x)$. Using the Sobolev embedding again, $H^1(\mathbb{R})\hookrightarrow L^\infty(\mathbb{R})$,
$$\|\partial_tu(s)\|_{L^{\infty}}\leqslant CM(1+s)^{-\frac{\alpha }{2}},\quad \|\partial_tv(s)\|_{L^{\infty}}\leqslant CM(1+s)^{-\frac{\alpha }{2}},\quad \|\partial_tw(s)\|_{L^{\infty}}\leqslant C(1+s)^{-\frac{\alpha }{2}}\|u-v\|_{X(T)},$$
and
$$\|\partial^2_{xt}v(s)\|_{L^{2}}\leqslant CM(1+s)^{-\frac{\alpha }{2}},\qquad
\|\partial^2_{xt} w(s)\|_{L^{2}}\leqslant (1+s)^{-\frac{\alpha }{2}}\|u-v\|_{X(T)}.$$
Whence,
\begin{equation}\label{Gn1}\left\|\partial_x \left(f(\partial_tu(s))-f(\partial_tv(s))\right)\right\|_{L^2}\leqslant CM^{p-1}(1+s)^{-\frac{p\alpha }{2}}\|u-v\|_{X(T)}.
\end{equation}
Similarly, by \eqref{21C}-\eqref{21BC},
\begin{equation}\label{Gn10}
\left\|\partial_t \left(f(\partial_tu(s))-f(\partial_tv(s))\right)\right\|_{L^2}\leqslant CM^{p-1}(1+s)^{-\frac{p\alpha }{2}}\|u-v\|_{X(T)}.
\end{equation}
Inserting (\ref{Fn1}), (\ref{Gn1}), and (\ref{Gn10}) into (\ref{Hn1}) and taking into account \eqref{alpha-alpha0}, we derive
\begin{multline*}
\|\partial_t^2\left(\Phi(u)-\Phi(v)\right)(t)\|_{L^2}+\|\partial^2_{xt}\left(\Phi(u)-\Phi(v)\right)(t)\|_{L^2}+\|\partial_x^2\left(\Phi(u)-\Phi(v)\right)(t)\|_{L^2}\\
\leq CM^{p-1}\|u-v\|_{X(T) }(1+t)^{-\frac{\alpha }{2}}.
\end{multline*}
Summing all estimates, it follows that $ \|\Phi(u)-\Phi(v)\|_{X(T)}\leq  C\,M^{p-1}\|u-v\|_{X(T)}$. Choosing $M>0$ so that $C\,M^{p-1}\leq 1/2$, we arrive at
$$ \|\Phi(u)-\Phi(v)\|_{X(T)}\leq \frac{1}{2}\|u-v\|_{X(T)}.$$
By the Banach fixed-point theorem, the proof is complete.
\hfill$\square$

%%%%%%%%%%%%%%%%%%%%%% Section 4 %%%%%%%%%%%%%%%%%%%%%%%%%%%%%

\section{Proof of Theorem \ref{globalexistence2}}\label{sec4}
Since the proof of Theorem \ref{globalexistence2} follows a similar approach to that of Theorem \ref{globalexistence1}, we outline the key steps while highlighting the necessary modifications. \\
\noindent {\bf Proof of Theorem \ref{globalexistence2}-(i).} To show that $\Phi:B_M(T)\longrightarrow B_M(T)$, we proceed in a similar manner as in the proof of Theorem \ref{globalexistence1}, utilizing the estimates
$$
\|f(\partial_t u(s),\partial_x u(s))\|_{L^2}\leq \|\partial_x u(s)\|_{\infty}^{q-1}\|\partial_x u(s)\|_{L^2}\leq C\,\|\partial_x u(s)\|_{H^1}^{q}\leq 
C  M^p(1+s)^{-\frac{q\alpha  }2},
$$
$$
\|\partial_x f(\partial_t u(s),\partial_x u(s))\|_{L^2}=\|\partial_x |\partial_x u(s)|^q\|_{L^2}\leq q\|\partial_x u\|_\infty^{q-1}\|\partial_x^2 u\|_{L^2}\leq C\,\|\partial_x u\|_{H^1}^{q}\leq C  M^p(1+s)^{-\frac{q\alpha }{2}},
$$
and
$$
\|\partial_t f(\partial_t u(s),\partial_x u(s))\|_{L^2}=\|\partial_t |\partial_x u(s)|^q\|_{L^2}\leq q\|\partial_x u\|_\infty^{q-1}\|\partial^2_{xt} u\|_{L^2}\leq C\,\|\partial_x u\|_{H^1}^{q-1}\|\partial^2_{xt} u\|_{L^2}\leq C  M^p(1+s)^{-\frac{q\alpha }{2}}.
$$
For the contraction mapping argument, we require the estimates:
\begin{eqnarray*}
\|f(\partial_t u(s),\partial_x u(s))-f(\partial_t v(s),\partial_x v(s))\|_{L^2}
&{}&\leq \|\partial_x u(s)-\partial_x v(s)\|_{L^2}\left(\|\partial_x u(s)\|^{q-1}_{\infty}+\|\partial_x v(s)\|^{q-1}_{\infty}\right)\\
&{}&\leq C\,(1+s)^{-\frac{\alpha }{2}} \|u- v\|_{X(T)}\left(\|\partial_x u(s)\|^{q-1}_{H^1}+\|\partial_x v(s)\|^{q-1}_{H^1}\right)\\
&{}&\leq C\,M^{q-1}(1+s)^{-\frac{q\alpha }{2}} \|u- v\|_{X(T)},
\end{eqnarray*}
\begin{eqnarray*}
&{}&\|\partial_x\left(f(\partial_t u(s),\partial_x u(s))-f(\partial_t v(s),\partial_x v(s))\right)\|_{L^2}\\
&{}&\leq C\|\partial_x^2 w(s)\|_{L^{2}}\|\partial_x u(s)\|^{q-1}_{L^{\infty}}+C\|\partial_x w(s)\|_{L^{\infty}}\left(\| \partial_x u(s)\|^{q-2}_{L^{\infty}}+\| \partial_x v(s)\|^{q-2}_{L^{\infty}}\right)\|\partial_x^2 v(s)\|_{L^{2}}\\
&{}&\leq C\,M^{q-1}(1+s)^{-\frac{q\alpha }{2}} \|u- v\|_{X(T)},
\end{eqnarray*}
and
\begin{eqnarray*}
&{}&\|\partial_t\left(f(\partial_t u(s),\partial_x u(s))-f(\partial_t v(s),\partial_x v(s))\right)\|_{L^2}\\
&{}&\leq C\|\partial^2_{xt} w(s)\|_{L^{2}}\|\partial_x u(s)\|^{q-1}_{L^{\infty}}+C\|\partial_t w(s)\|_{L^{\infty}}\left(\| \partial_x u(s)\|^{q-2}_{L^{\infty}}+\| \partial_x v(s)\|^{q-2}_{L^{\infty}}\right)\|\partial^2_{xt} v(s)\|_{L^{2}}\\
&{}&\leq C\,M^{q-1}(1+s)^{-\frac{q\alpha }{2}} \|u- v\|_{X(T)},
\end{eqnarray*}
where $w(t,x)\doteq u(t,x)-v(t,x)$. These estimates ensure the existence and uniqueness of solutions via the Banach fixed-point theorem.
\hfill$\square$\\

\noindent {\bf Proof of Theorem \ref{globalexistence2}-(ii).} We adopt a similar approach as in the proof of Theorem \ref{globalexistence1}, emplying the reasoning established in the proof of Theorem \ref{globalexistence2}-(i) along with the estimate
\begin{eqnarray*}
\|f(\partial_t u(s),\partial_x u(s))\|_{L^2}&\leq& \|\partial_t u(s)\|^p_{L^\infty}\||\partial_x u(s)|^q\|_{L^2}\\
&\leq& C\,\|\partial_t u(s)\|^p_{H^1}\||\partial_x u(s)|^q\|_{L^2}\\
&\leq& C\,M^{p}(1+s)^{-\frac{p\alpha }{2}}M^{q}(1+s)^{-\frac{q\alpha }{2}}\\
&=& C\,M^{p+q}(1+s)^{-\frac{\alpha (p+q)}{2}}.
\end{eqnarray*}
Therefore,
$$
(1+t)^{-1}\| \Phi(u)^{nl}(t)\|_{L^2}+
\|\partial_t \Phi(u)^{nl}(t)\|_{L^2}+\|\partial_x  \Phi(u)^{nl}(t)\|_{L^2}\leq C\,M^{p+q}(1+t)^{-\frac{\alpha }{2}}\int_0^t(1+s)^{-\frac{(p+q-1)\alpha }{2}}\,ds.
$$
Since $p+q>1+\frac{2}{\alpha_0 }$, the integral converges and we conclude
$$
(1+t)^{-1}\| \Phi(u)^{nl}(t)\|_{L^2}+
\|\partial_t \Phi(u)^{nl}(t)\|_{L^2}+\|\partial_x  \Phi(u)^{nl}(t)\|_{L^2}\leq
 C\,M^{p+q}(1+t)^{-\frac{\alpha }{2}}.
$$
This ensures that $\Phi$ maps $B_M(T)$ into itself, and a similar argument establishes the contraction property, thereby concluding the proof.
\hfill$\square$
%%%%%%%%%%%%%%%%%%%%%%%%%%%%%. Section 3  %%%%%%%%%%%%%%%%%%%%%%%%%%%%%%%

\section{Proof of Theorem \ref{th2}}\label{sec5}

%%%%%%%%%%%%%%%%%%%%%%%%%%%%
%%%%%%%%%%%%%%%%%%%%%%%%%%%%

This section is dedicated to the proof of Theorem \ref{th2}, which aims to determine the blow-up region and establish lifespan estimates for problem \eqref{NLWb} in the one-dimensional space.\\
We first introduce a positive test function which is a particular positive  solution
 $\psi(x, t)$  with separated variables and satisfies the conjugate equation corresponding to  the linear problem, namely $\psi(x, t)$  satisfies
\begin{equation}\label{lambda-eq}
\partial^2_t \psi(x, t)-\partial_x^2 \psi(x, t) -V(x)\partial_t\psi(x, t)=0.
\end{equation}
More precisely, we choose the function $\psi$ given by
\begin{equation}
\label{psi33}
\psi(x,t):=\rho(t)\phi(x);
\quad
\rho(t):=e^{-t},
\end{equation}
where $\phi(x)=\phi(-x)$  is  a solution of the  problem
\begin{equation}\label{phi}
 \phi''(x) =\left(1+V(x)\right)\phi(x), \qquad \hbox{for all}\, \, x\in \R,
\end{equation}
with initial data $\phi(0)=1$, $\phi'(0)=0$. Indeed, $\phi$ can be obtained as a solution of the integral equation
$$\phi(r)=1+\int_0^r(r-s)(1+V(s))\phi(s)\,ds,\qquad r>0.$$
Note that the existence of a positive solution for the associated problem of \eqref{phi} is studied in \cite{Ikehata,LLTW} in higher dimension.
However, the specific case of one spatial dimension has not been addressed in these works. 
However, following the strategy used in \cite{Ikehata} in the case of  higher dimension, 
we easily prove the following properties of $\phi$.
\begin{lemma}
\label{lem0}
  There exists an even function $\phi$ solution of \eqref{phi} verifying $\phi(0)=1$, $\phi'(0)=0$,
which satisfies
\begin{equation}
\label{elip1}
0<\phi (x)\le C (1+|x|)^{\frac{\mu_0}{2}}e^{|x|}, \qquad \hbox{for all}\,\, \,x\in \R.
\end{equation}
\end{lemma}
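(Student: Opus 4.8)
The plan is to read \eqref{phi} as a linear second-order ODE with the smooth, bounded coefficient $1+V$ and to establish existence, evenness, positivity, and the growth bound in that order; only the last step carries real content. First I would invoke standard linear ODE theory (equivalently, a Picard iteration on the stated Volterra integral equation $\phi(r)=1+\int_0^r(r-s)(1+V(s))\phi(s)\,ds$) to produce a unique global solution $\phi\in C^2(\R)$ with $\phi(0)=1$, $\phi'(0)=0$. Evenness is then immediate from uniqueness: since $V(x)=V(-x)$, the function $x\mapsto\phi(-x)$ solves the same initial value problem, hence coincides with $\phi$.

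For positivity I would use the convexity forced by the equation. Because $\phi''=(1+V)\phi$ with $1+V>0$, as long as $\phi>0$ one has $\phi''>0$, so $\phi'$ is increasing; starting from $\phi'(0)=0$ this gives $\phi'\ge 0$ and $\phi\ge\phi(0)=1$ on $[0,\infty)$. A first-zero argument then excludes $\phi$ ever vanishing: at a hypothetical first zero $x_0$ one would have $\phi>0$ on $[0,x_0)$, whence $\phi$ is increasing there and $\phi(x_0)\ge 1>0$, a contradiction. Evenness extends $\phi>0$ to all of $\R$.

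The heart of the matter, and the main obstacle, is the bound with the exact exponent $\mu_0/2$. A direct supersolution of the form $(1+x)^{\mu_0/2}e^x$ fails, because the subleading part of $V(1+x)^2=\mu_0(1+x)^2/\sqrt{1+x^2}\sim\mu_0(x+2)$ overwhelms the gain from the polynomial weight unless $\mu_0$ is large. To capture the sharp exponent I would instead factor out the expected exponential rate and reduce to a Gronwall inequality. Setting $\phi=e^x w$ turns \eqref{phi} into $w''+2w'=Vw$, i.e. $(e^{2x}w')'=e^{2x}Vw$ with $w(0)=1$, $w'(0)=-1$. Integrating twice and swapping the order of integration gives, for $x\ge 0$, the identity $w(x)=1-\tfrac{1-e^{-2x}}{2}+\tfrac12\int_0^x\bigl(1-e^{-2(x-s)}\bigr)V(s)w(s)\,ds$. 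Using $w>0$ (from the positivity of $\phi$) together with $0\le 1-e^{-2(x-s)}\le 1$, this yields the clean integral inequality $w(x)\le 1+\tfrac12\int_0^x V(s)w(s)\,ds$.

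Gronwall's lemma then gives $w(x)\le\exp\bigl(\tfrac12\int_0^x V(s)\,ds\bigr)$, and since $\int_0^x V(s)\,ds=\mu_0\,\mathrm{arcsinh}\,x=\mu_0\ln\bigl(x+\sqrt{1+x^2}\bigr)$ one obtains $w(x)\le\bigl(x+\sqrt{1+x^2}\bigr)^{\mu_0/2}\le\bigl(2(1+x)\bigr)^{\mu_0/2}$, using $\sqrt{1+x^2}\le 1+x$ for $x\ge 0$. Hence $\phi(x)=e^x w(x)\le C(1+x)^{\mu_0/2}e^x$ on $[0,\infty)$, and evenness delivers \eqref{elip1} on all of $\R$. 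The decisive point is that the integrating factor $e^{2x}$ isolates exactly the logarithmically divergent quantity $\int_0^x V$, whose size $\tfrac{\mu_0}{2}\log x$ produces precisely the polynomial correction $(1+|x|)^{\mu_0/2}$, something the crude supersolution could not achieve.
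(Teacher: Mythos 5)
Your proof is correct, and the decisive estimate is obtained by a genuinely different mechanism from the paper's, even though both share the same skeleton: factor out the exponential rate $e^{x}$ and exploit that $\int_0^x V(s)\,ds=\mu_0\,\mathrm{arcsinh}\,x\sim\mu_0\ln x$. The paper sets $\phi=e^{x+q(x)}$, which turns \eqref{phi} into the Riccati-type equation $q''+2q'+(q')^2=V$; after multiplying by the integrating factor $e^{2x}$ and integrating, the nonnegative term $(q')^2$ is discarded, giving a one-sided bound on $q'$ whose right-hand side involves only $V$ and not the unknown. An integration by parts of $\int_0^x e^{2y}V(y)\,dy$ then yields $q'(x)\le \frac{\mu_0}{2x}+\mathcal{O}(x^{-2})$ and hence $q(x)\le\frac{\mu_0}{2}\ln x+\mathcal{O}(1)$, but only for $x$ large. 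You instead keep the substitution linear, $\phi=e^{x}w$, derive an exact Volterra identity for $w$ by integrating $(e^{2x}w')'=e^{2x}Vw$ twice and applying Fubini, and close with Gronwall; the positivity of $(q')^2$ in the paper plays exactly the role that Gronwall plays for you (indeed $q=\ln w$, so the two inequalities control the same quantity). What your route buys: an explicit constant and a bound valid for all $x\ge0$ rather than an asymptotic statement, an exact evaluation of $\int_0^x V$ in place of an integration-by-parts asymptotic, and full arguments for existence, evenness and positivity, which the paper dispatches with ``Clearly''. What the paper's route buys: it avoids the double integration and Gronwall step, obtaining a pointwise bound on $q'$ in one pass, at the price of a conclusion that is only asymptotic and implicitly relies on continuity on compact sets to cover bounded $x$.
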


\begin{proof}
Clearly, we have $\phi(x)>0$, and  $x\phi'(x)\ge0$ for all $x\in \R$. Let $\phi(x)=e^{x+q(x)}$. 
By substituting $\phi$ into \eqref{phi}, one can easily see that the function $q$ satisfies
\begin{equation}\label{functionq}
q''(x)+2q'(x)+\big(q'(x)\big)^2=V(x), \qquad \hbox{for all}\,\, \,x\in \R,
\end{equation}
where $q(0)=0$ and $q'(0)=-1$.
Multiplying \eqref{functionq} by $e^{2x}$, we deduce after integrating over $[0, x]$ that
\begin{equation}\label{functionq1}
e^{2x}q'(x)\le -1+\int_0^xe^{2y}V(y)dy, \qquad \hbox{for all}\,\, \, x>0.
\end{equation}
By using the expression of $V$,  and integrating by parts we infer 
\begin{equation}\label{functionq2}
\int_0^xe^{2y}V(y)dy=\frac{\mu_0}{2\sqrt{1+x^2}}e^{2x}-\frac12\int_0^xe^{2y}V'(y)dy, \qquad \hbox{for all}\,\, \, x>0,
\end{equation}
which is asymptotically
$$\frac{\mu_0}{2}x^{-1}e^{2x}+\mathcal{O}(x^{-2})e^{2x},$$
since $V(x)\sim\mu_0 x^{-1}$ and $V'(x)\sim-\mu_0 x^{-2}$ for large $x$. Hence, by \eqref{functionq1} and \eqref{functionq2}, we write for $x$ large enough,
\begin{equation}\label{functionq3}
q'(x)\le \frac{\mu_0}{2x}+\mathcal{O}(x^{-2}).
\end{equation}
Therefore,
 for $x$ large enough,
\begin{equation}\label{functionq4}
q(x)\le \frac{\mu_0}{2}\ln (x)+\mathcal{O}(1).
\end{equation}
Inequality \eqref{elip1} is a direct consequence of \eqref{functionq4} owing to the positivity and the symmetry of  $\phi$.
\end{proof}

\par

Furthermore, by employing  the estimate \eqref{elip1}  along with the expression of $\psi$ given by \eqref{psi33}, and  proceeding along the same lines as in the derivation of inequality (2.5) in \cite{YZ06} or \cite[Lemma~3.1]{HF}, we obtain the following result. 

\begin{lemma} 
\label{lem1}
There exists a constant $C=C(R_0,\mu_0)>0$ such that
\begin{equation}
\label{psi}
\int_{|x|\leq R_0+t}\psi(x,t)dx
\leq C
 (1+t)^{\frac{\mu_0 }{2}},
\quad\hbox{for all}\,\, t\ge0.
\end{equation}
\end{lemma}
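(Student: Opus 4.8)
The plan is to insert the explicit form of the test function and reduce the two-sided integral to a one-sided one. Since $\psi(x,t)=\rho(t)\phi(x)=e^{-t}\phi(x)$ by \eqref{psi33}, and since the bound \eqref{elip1} from Lemma \ref{lem0} gives $0<\phi(x)\le C(1+|x|)^{\frac{\mu_0}{2}}e^{|x|}$, I would first write
\[
\int_{|x|\le R_0+t}\psi(x,t)\,dx=e^{-t}\int_{|x|\le R_0+t}\phi(x)\,dx\le C\,e^{-t}\int_{|x|\le R_0+t}(1+|x|)^{\frac{\mu_0}{2}}e^{|x|}\,dx.
\]
Because the integrand on the right is even, this equals $2C\,e^{-t}\int_0^{R_0+t}(1+x)^{\frac{\mu_0}{2}}e^{x}\,dx$, which is the one-dimensional integral I actually need to control.

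Next I would bound the polynomial factor by its value at the right endpoint. Since $x\mapsto(1+x)^{\frac{\mu_0}{2}}$ is increasing on $[0,R_0+t]$, I pull it out as $(1+R_0+t)^{\frac{\mu_0}{2}}$ and integrate the remaining exponential:
\[
2C\,e^{-t}\int_0^{R_0+t}(1+x)^{\frac{\mu_0}{2}}e^{x}\,dx\le 2C\,e^{-t}(1+R_0+t)^{\frac{\mu_0}{2}}\bigl(e^{R_0+t}-1\bigr)\le 2C\,e^{R_0}(1+R_0+t)^{\frac{\mu_0}{2}}.
\]
The essential point, which is really the whole mechanism of the lemma, is that the decaying prefactor $e^{-t}$ coming from $\rho(t)$ exactly cancels the factor $e^{t}$ produced by integrating $e^{x}$ over the expanding light cone $|x|\le R_0+t$; only the polynomial growth survives.

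Finally, I would absorb the constant $R_0$ into the time variable using $1+R_0+t\le(1+R_0)(1+t)$, so that $(1+R_0+t)^{\frac{\mu_0}{2}}\le(1+R_0)^{\frac{\mu_0}{2}}(1+t)^{\frac{\mu_0}{2}}$, yielding the claimed bound with $C=C(R_0,\mu_0)=2C(1+R_0)^{\frac{\mu_0}{2}}e^{R_0}$. I do not anticipate a genuine obstacle here: the estimate is entirely elementary once Lemma \ref{lem0} is in hand, and the only mild care needed is in keeping track of the $R_0$-dependence of the constant and in the crude but sufficient bound $e^{R_0+t}-1\le e^{R_0+t}$; the nontrivial content has already been expended in establishing the exponential-type upper bound \eqref{elip1} for $\phi$.
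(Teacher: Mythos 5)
Your proof is correct and is essentially the paper's own argument: the paper proves Lemma \ref{lem1} by citing the standard Yordanov--Zhang-type computation (inequality (2.5) in \cite{YZ06}), which is exactly what you carry out explicitly — insert $\psi=e^{-t}\phi$, apply the bound \eqref{elip1}, and let the prefactor $e^{-t}$ cancel the $e^{R_0+t}$ produced by integrating over the light cone, leaving only the polynomial factor. The endpoint bound on $(1+x)^{\mu_0/2}$ and the absorption of $R_0$ into the constant are handled correctly, so there is nothing to add.
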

The estimate \eqref{psi} plays a crucial role in establishing the blow-up result stated in Theorem \ref{th2}. \\

 \noindent{\bf Proof of  Theorem \ref{th2}.} The proof relies on Lemma \ref{lem1} and follows a similar strategy to the one used in \cite[Theorem~2.3]{HF}. Specifically, we employ the test function method, originally introduced by Baras and Pierre \cite{Baras}, later refined by Zhang \cite{Zhang2001}, and further developed in the works of Mitidieri and Pohozaev \cite{19}, which has since been widely applied in various contexts,  including the works of Kirane et al.  \cite{KLT} and Fino et al. \cite{Fino1,Fino3,Fino4, HF0}.
 \hfill$\square$

%%%%%%%%%%%%%%%%%%%%%%%%%%%%%%%%%

\bibliographystyle{elsarticle-num}
%\bibliography{bibliographie(damping2)}

%% Authors are advised to submit their bibtex database files. They are
%% requested to list a bibtex style file in the manuscript if they do
%% not want to use elsarticle-num.bst.
%% References without bibTeX database:

\end{document}